\begin{document}

\newtheorem{mthm}{Theorem}
\newtheorem{mcor}{Corollary}
\newtheorem{mpro}{Proposition}
\newtheorem{mfig}{figure}
\newtheorem{mlem}{Lemma}
\newtheorem{mdef}{Definition}
\newtheorem{mrem}{Remark}
\newtheorem{mpic}{Picture}
\newtheorem{rem}{Remark}[section]
\newcommand{\ra}{{\mbox{$\rightarrow$}}}
\newtheorem{thm}{Theorem}[section]
\newtheorem{pro}{Proposition}[section]
\newtheorem{lem}{Lemma}[section]
\newtheorem{defi}{Definition}[section]
\newtheorem{cor}{Corollary}[section]

\title[]{Further study on periodic solutions   of  elliptic equations with a fractional Laplacian}

\author{ Zhuoran Du$\dag$}
 \footnotetext{$\dag$College of Mathematics and
Econometrics, Hunan University, Changsha 410082,
                 PRC.
 E-mail: {duzr@hnu.edu.cn}}

\author{Changfeng Gui$^\ast$}
 \footnotetext{$^\ast$Department of Mathematics,  University of Texas at San Antonio, TX78249, USA.
E-mail: {changfeng.gui@utsa.edu}}

\date{}
\maketitle

\begin{abstract}
We obtain some existence theorems for periodic solutions to several linear equations involving fractional Laplacian.
We also prove that the lower bound of all periods for semilinear elliptic equations  involving fractional Laplacian is not larger than some exact positive constant. Hamiltonian identity, Modica-type  inequalities and an estimate of the energy for periodic solutions are also established.
\end{abstract}

\bigskip
{\bf Mathematics Subject Classification(2010)}:
35J61, 35B10, 35A01, 58J55.

{\bf Key words} {\em periodic solutions,  fractional Laplacian, bifurcation, Hamiltonian identity,  Modica-type  inequalities}

\section{Introduction}

 We first consider the following  linear problem involving  fractional  Laplacian
\begin{equation}\label{1}
  Lu:=(-\partial_{xx})^s  u(x)  + k(x)u(x)=g(x)\indent
        \mbox{ in } \mathbb{R},
\end{equation}
where $k, g$ are periodic functions with the same period $T$ and $k$ is bounded in $\mathbb{R}$.
Here $(-\partial_{xx})^s$, $ s\in(0,1)$, denotes the usual fractional
Laplace operator, a Fourier multiplier of symbol $|\xi|^{2s}$.

The fractional Laplace operator $(-\Delta)^{s}$ can  be defined as a
Dirichlet-to-Neumann map for a so-called $s$-harmonic extension
problem (see \cite{caff}).
Given a function $\phi(x)$, the solution $\Phi(x,y)$ of the following problem
$$
\begin{cases}
\mbox{div}(y^a\nabla    \Phi)=0 \indent &\mbox{in}~~
\mathbb{R}_+^{n+1}=\{(x,y): x\in\mathbb{R}^{n}, y>0\},\\
\Phi(x,0)=\phi(x)\indent
&\mbox{on}~~\mathbb{R}^{n}\\
\end{cases}
$$
is called the $s$-harmonic extension of $\phi$, and we denote it as $\Phi:=\mbox{Ext}(\phi)$. The parameter $a$ is related to the power $s$ of the fractional Laplacian $(-\Delta)^{s}$ by the formula
$a=1-2s\in(-1,1)$.
One has
\begin{equation}\label{2}
\Phi(x,y)=\int_{\mathbb{R}^{n}}p_s(x-z,y)\phi(z)dz=\int_{\mathbb{R}^{n}}p_s(z,y)\phi(x-z)dz,
\end{equation}
where $p_s(x,y)$ is the $s$-Poisson kernel
$$
p_s(x,y)=C_{n,s}\frac{y^{2s}}{(|x|^2+|y|^2)^{\frac{n+2s}{2}}},
$$
and $C_{n,s}$ is the constant which makes $\int_{\mathbb{R}^{n}}p_s(x,y)dx=1$.
Caffarelli and Silvestre in \cite{caff} proved that
$$
(-\Delta)^{s}\phi(x)=d_s\frac{\partial  \Phi}{\partial \nu^a}   \indent~~ \mbox{in}~~~~~\mathbb{R}^{n}=\partial \mathbb{R}^{n+1}_+,
$$
where
$$
\frac{\partial  \Phi}{\partial \nu^a}:=-\lim_{y\downarrow
0}y^{a}\frac{\partial  \Phi}{\partial y},~~d_s=2^{2s-1}\frac{\Gamma(s)}{\Gamma(1-s)}.
$$

From (\ref{2}) and the  formula of $s$-Poisson kernel, we can easily deduce that the $s$-harmonic extension $\mbox{Ext}(u)(x,y)$ of an odd (resp. even) periodic function $u(x)(x\in\mathbb{R})$  is also odd (resp. even) and periodic with respect to the variable $x$ with the same period as of $u$.

We will first establish several existence theorems of periodic solutions to linear problems relevant to (\ref{1}).

We also consider  the following semilinear equation
\begin{equation}\label{3}
    (-\partial_{xx})^s  u(x)+F'(u(x))=0,\indent
       u(x+T)=u(x)~~~ \mbox{ in } \mathbb{R}.
 \end{equation}
Here the  function $F$ is a smooth double-well potential with wells at $+1$ and $-1$, namely, it satisfies
\begin{eqnarray}
\label{4}
 \left\{\begin{array}{l} F(1)=F(-1)=0<F(u),\indent
 \forall -1<u<1,\\
F'(1)=F'(-1)=0.\\
\end{array}
\right.
\end{eqnarray}
We also assume that \begin{equation}\label{5}
F \mbox{ is nondecreasing in }  (-1,0) \mbox{ and nonincreasing in } (0,1).
 \end{equation}

 The authors and  Zhang in \cite{gzd} obtained the existence of periodic solutions with any period $T>T_0$ to (\ref{3}), by using variational methods.
An estimate of energy for periodic solutions also had been established.
In \cite{ddgw} the authors study Delaunay-type singular solutions for the fractional Yamabe problem with an isolated singularity at the origin.
Precisely, after  the Emden-Fowler change of variale, they reformulate their problem into a variational one for  some periodic function with period $L$.
Then they prove that there is the smallest period $L_0$, namely the periodic problem admits nonconstant minimizer for any $L>L_0$.
Existence of periodic solutions to so-called pesudo-relativistic Schr\"{o}dinger equations are also established in \cite{Ambrosio1} and \cite{Ambrosio2}.
In \cite{roncal},the authors establish interior and boundary Harnack's inequalities for nonnegative solutions to $(-\Delta)^s  u=0$ with periodic boundary conditions,
and they also obtain regularity properties of the fractional Laplacian with periodic boundary conditions and the pointwise integro-differential formula for the operator.

We will obtain an exact upper bound value of $T_0$ by using Hopf bifurcation theory in section 3.
  Hamiltonian identity and Modica-type  inequalities for periodic solutions of (\ref{3}) will also be established in section 4.
Finally in section 5 we will improve the estimate of the energy of periodic solutions to (\ref{3})  in \cite{gzd}.

\section{Existence theorems of linear  problems}

We introduce the following spaces
\begin{eqnarray}
\nonumber &
\mathcal{H_T}:=&\{U(x,y): U(x+T,y)=U(x,y),~\forall y\geq0,
\\ \nonumber &&
\|U\|^2_T:=\int_{-\frac{T}{2}}^{\frac{T}{2}}\int_{\mathbb{R}^+}y^a|\nabla U(x,y)|^2dxdy
+\int_{-\frac{T}{2}}^{\frac{T}{2}}U^2(x,0))dx<\infty\},
\end{eqnarray}
\begin{eqnarray}
\nonumber &
H_T^s:=&\{u(x): u(x+T)=u(x),
\\ \nonumber &&
\|u\|^2_{H^s_T}:=\int_{-\frac{T}{2}}^{\frac{T}{2}}\int_{-\frac{T}{2}}^{\frac{T}{2}}
\frac{|u(x)-u(\bar{x})|^2}{|x-\bar{x}|^{1+2s}}dxd\bar{x}+
\int_{-\frac{T}{2}}^{\frac{T}{2}}u^2(x)dx<\infty\},
\end{eqnarray}
\begin{eqnarray}
\nonumber &
\tilde{H}_T^s:=&\{u(x)\in H_T^s, ~
\|u\|^2_{\tilde{H}^s_T}:=\int_{-\frac{T}{2}}^{\frac{T}{2}}u^2(x)dx
\\ \nonumber &&
+\int_{-\frac{T}{2}}^{\frac{T}{2}}\int_{-\frac{T}{2}}^{\frac{T}{2}}
\sum_{j\in Z, j=-\infty}^{+\infty}\frac{|u(x)-u(\bar{x})|^2}{|x-\bar{x}+jT|^{1+2s}}dxd\bar{x}
<\infty\},
\end{eqnarray}
$$
L_T^2:=\{u(x): u(x+T)=u(x), ~\|u\|^2_{L^2_T}:=
\int_{-\frac{T}{2}}^{\frac{T}{2}}u^2(x)dx<\infty\}.
$$
Clearly
\begin{equation}\label{6}
\tilde{H}_T^s\hookrightarrow H_T^s.
\end{equation}
Note that for periodic functions $u$, we have
$$
\int_{-\frac{T}{2}}^{\frac{T}{2}}\int_{-\frac{T}{2}}^{\frac{T}{2}}
\sum_{j=-\infty}^{+\infty}\frac{|u(x)-u(\bar{x})|^2}{|x-\bar{x}+jT|^{1+2s}}dxd\bar{x}=\int_{-\frac{T}{2}}^{\frac{T}{2}}\int_{-\infty}^{\infty}
\frac{|u(x)-u(\bar{x})|^2}{|x-\bar{x}|^{1+2s}}dxd\bar{x}.
$$

We claim
\begin{equation}\label{7}
\mbox{ Tr}(\mathcal{H_T})=\tilde{H}_T^s.
\end{equation}
Indeed, for any given $u\in \tilde{H}_T^s$, we may first prove $\mbox{Ext}(u)\in \mathcal{H_T}$, which means the surjectivity of the trace operator(if it is well defined) from $\mathcal{H_T}$ to $\tilde{H}_T^s$.
Since $\mbox{Ext}(u)$ is periodic with respect to the variable $x$ for any $y\geq0$, we have
$$
\int_{-\frac{T}{2}}^{\frac{T}{2}}\int_{\mathbb{R}^+}y^a|\nabla \mbox{Ext}(u)(x,y)|^2dxdy=
\frac{1}{d_s}\int_{-\frac{T}{2}}^{\frac{T}{2}}u(x) (-\partial_{xx})^su(x)dx.
$$
In view of
$$
(-\partial_{xx})^su(x)=C(s)\int_{-\infty}^{\infty}
\frac{u(x)-u(\bar{x})}{|x-\bar{x}|^{1+2s}}d\bar{x},
$$
we have
\begin{eqnarray}
\nonumber
&&\int_{-\frac{T}{2}}^{\frac{T}{2}}u(x) (-\partial_{xx})^su(x)dx=C(s)\int_{-\frac{T}{2}}^{\frac{T}{2}}\int_{\mathbb{R}}\frac{u(x)[u(x)-u(\bar{x})]}{|x-\bar{x}|^{1+2s}}d\bar{x} dx
\\
\nonumber
&&=\frac{C(s)}{2}\left\{\int_{-\frac{T}{2}}^{\frac{T}{2}}\int_{\mathbb{R}}\frac{u(x)[u(x)-u(\bar{x})]}{|x-\bar{x}|^{1+2s}}d\bar{x} dx
+\int_{-\frac{T}{2}}^{\frac{T}{2}}\int_{\mathbb{R}}\frac{u(\bar{x})[u(\bar{x})-u(x)]}{|x-\bar{x}|^{1+2s}}d\bar{x} dx\right\}
\\
\nonumber
&&=\frac{C(s)}{2}\int_{-\frac{T}{2}}^{\frac{T}{2}}\int_{\mathbb{R}}\frac{|u(x)-u(\bar{x})|^2}{|x-\bar{x}|^{1+2s}}d\bar{x} dx,
\end{eqnarray}
where we used the fact that $u$ is periodic.
Hence
\begin{eqnarray}
\nonumber &&
\int_{-\frac{T}{2}}^{\frac{T}{2}}\int_{-\frac{T}{2}}^{\frac{T}{2}}
\sum_{j=-\infty}^{+\infty}\frac{|u(x)-u(\bar{x})|^2}{|x-\bar{x}+jT|^{1+2s}}dxd\bar{x}
\\ \nonumber &&
=\frac{2d_s}{C(s)}\int_{-\frac{T}{2}}^{\frac{T}{2}}\int_{\mathbb{R}^+}y^a|\nabla \mbox{Ext}(u)(x,y)|^2dxdy,
\end{eqnarray}
which yields that $c\|\mbox{Ext}(u)\|_T\leq\|u\|_{\tilde{H}^s_T}\leq C\|\mbox{Ext}(u)\|_T$.  Therefore we have  $\mbox{Ext}(u)\in \mathcal{H_T}$.

Next we prove that for any $U(x,y)\in \mathcal{H_T}$ satisfying $U(x,0)=u(x)$,  the following inequality holds
\begin{equation}\label{ineq}
\int_{-\frac{T}{2}}^{\frac{T}{2}}\int_{-\frac{T}{2}}^{\frac{T}{2}}
\sum_{j=-\infty}^{+\infty}\frac{|u(x)-u(\bar{x})|^2}{|x-\bar{x}+jT|^{1+2s}}dxd\bar{x}
\leq\frac{2d_s}{C(s)}\int_{-\frac{T}{2}}^{\frac{T}{2}}\int_{\mathbb{R}^+}y^a|\nabla U(x,y)|^2dxdy.
\end{equation}
Since we have
$$
\int_{-\frac{T}{2}}^{\frac{T}{2}}u(x) (-\partial_{xx})^su(x)dx=
d_s\int_{-\frac{T}{2}}^{\frac{T}{2}}\int_{\mathbb{R}^+}y^a\nabla \mbox{Ext}(u)(x,y)\cdot \nabla U(x,y) dxdy,
$$
then
\begin{eqnarray}
\nonumber &&
\frac{C(s)}{2d_s}\int_{-\frac{T}{2}}^{\frac{T}{2}}\int_{\mathbb{R}}\frac{|u(x)-u(\bar{x})|^2}{|x-\bar{x}|^{1+2s}}d\bar{x} dx
\\ \nonumber &&
\leq \left(\int_{-\frac{T}{2}}^{\frac{T}{2}}\int_{\mathbb{R}^+}y^a|\nabla \mbox{Ext}(u)(x,y)|^2dxdy\right)^{\frac{1}{2}}
\left(\int_{-\frac{T}{2}}^{\frac{T}{2}}\int_{\mathbb{R}^+}y^a|\nabla U(x,y)|^2dxdy\right)^{\frac{1}{2}}
\\ \nonumber &&
=\sqrt{\frac{C(s)}{2d_s}} \left(\int_{-\frac{T}{2}}^{\frac{T}{2}}\int_{\mathbb{R}}\frac{|u(x)-u(\bar{x})|^2}{|x-\bar{x}|^{1+2s}}d\bar{x} dx\right)^{\frac{1}{2}}
\left(\int_{-\frac{T}{2}}^{\frac{T}{2}}\int_{\mathbb{R}^+}y^a|\nabla U(x,y)|^2dxdy\right)^{\frac{1}{2}},
\end{eqnarray}
which yields the above desired inequality \eqref{ineq}. Therefore the trace operator is well defined.

Hence we have proved claim (\ref{7}).

From now on, we denote the $s$-harmonic extension $\mbox{Ext}(u)$ of $u$  as $U$ for simplicity of notation.

Set
$$
B(U,V):=\int_{\mathbb{R}^+}\int_{-\frac{T}{2}}^{\frac{T}{2}}y^a\nabla U(x,y)\cdot \nabla V(x,y)dxdy
+\int_{-\frac{T}{2}}^{\frac{T}{2}}k(x)U(x,0)V(x,0)dx.
$$
We denote
$$
B_\mu(U,V):=B(U,V)+\mu \int_{-\frac{T}{2}}^{\frac{T}{2}}U(x,0)V(x,0)dx
.$$
Clearly we see that
$
B_\mu(U,V)=B_\mu(V,U),
$
and there exists $\gamma\geq0$ such that
$$
B_\gamma(U,U)=B(U,U)+\gamma\|U(x,0)\|^2_{L^2_T}\geq C\|U\|^2_{T}.
$$
Hence
$$
(\cdot,\cdot):=B_\gamma(\cdot,\cdot)
$$
is an inner product in $\mathcal{H_T}$.

\begin{thm}\label{thm1}(First existence Theorem)
There exists $\gamma\geq0$ such that for each  $\mu\geq \gamma$, and each $g\in L^2_T$,   the problem
 \begin{equation}\label{8}
  L_\mu u:=Lu+\mu u=g\indent
        \mbox{ in } \mathbb{R},
\end{equation}
admits a unique weak periodic solution $u:=L^{-1}_{\mu} g\in H^s_T$. Moreover,
the following estimate holds
$$
\|u\|_{H^s_T}\leq C\|g\|_{L^2_T}.
$$
\end{thm}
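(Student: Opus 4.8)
By a weak periodic solution of \eqref{8} I mean a function $u=U(\cdot,0)$, with $U\in\mathcal{H_T}$, such that
$$
B_\mu(U,V)=\int_{-\frac T2}^{\frac T2}g(x)\,V(x,0)\,dx\qquad\text{for all }V\in\mathcal{H_T}.
$$
By \eqref{7} (and \eqref{6}) the trace $u$ then lies in $\tilde{H}^s_T\hookrightarrow H^s_T$, and, by the extension identities used to prove \eqref{7} together with the Caffarelli--Silvestre formula recalled in Section~1, this identity is---up to the normalizing constant $d_s$---exactly the weak formulation of $(-\partial_{xx})^su+k(x)u+\mu u=g$ on $\mathbb{R}$. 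The plan is to recast \eqref{8} in this extension form and to solve the resulting variational problem with the Lax--Milgram theorem in the Hilbert space $\mathcal{H_T}$ endowed with the inner product $(\cdot,\cdot)=B_\gamma(\cdot,\cdot)$.

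First I would check the hypotheses of Lax--Milgram for the symmetric bilinear form $B_\mu$ on $\mathcal{H_T}$ when $\mu\ge\gamma$: its boundedness follows from the Cauchy--Schwarz inequality, the boundedness of $k$, and the elementary estimate $\|V(\cdot,0)\|_{L^2_T}\le\|V\|_T$, which also shows that $V\mapsto\int_{-T/2}^{T/2}g\,V(\cdot,0)\,dx$ is a bounded linear functional on $\mathcal{H_T}$ when $g\in L^2_T$; and its coercivity is immediate from
$$
B_\mu(U,U)=B_\gamma(U,U)+(\mu-\gamma)\|U(\cdot,0)\|^2_{L^2_T}\ge B_\gamma(U,U)\ge C\|U\|^2_T,
$$
using the coercivity of $B_\gamma$ recorded just before the statement. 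Hence there is a unique $U\in\mathcal{H_T}$ with $B_\mu(U,V)=\int_{-T/2}^{T/2}g\,V(\cdot,0)\,dx$ for all $V\in\mathcal{H_T}$. Testing this identity against all $V$ supported in $\{y>0\}$ shows $\mathrm{div}(y^a\nabla U)=0$ there in the weak sense, so $U$ is the $s$-harmonic extension of $u:=U(\cdot,0)$, which is therefore a weak periodic solution of \eqref{8}.

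For the estimate I would take $V=U$ in the identity and use Cauchy--Schwarz:
$$
C\|U\|^2_T\le B_\mu(U,U)=\int_{-\frac T2}^{\frac T2}g\,u\,dx\le\|g\|_{L^2_T}\|u\|_{L^2_T}\le\|g\|_{L^2_T}\|U\|_T,
$$
so $\|U\|_T\le C^{-1}\|g\|_{L^2_T}$, and then $\|u\|_{H^s_T}\le\|u\|_{\tilde{H}^s_T}\le C\|U\|_T\le C\|g\|_{L^2_T}$ by the trace comparison $\|u\|_{\tilde{H}^s_T}\le C\|\mathrm{Ext}(u)\|_T$ obtained in the proof of \eqref{7}. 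Uniqueness is again coercivity: the difference $W$ of the $s$-harmonic extensions of two weak solutions satisfies $B_\mu(W,V)=0$ for every $V$, so $B_\mu(W,W)=0$ forces $W=0$ and hence the two solutions coincide. The only step needing real care is the bookkeeping in the first paragraph: verifying that the abstract Lax--Milgram solution is indeed the $s$-harmonic extension of its trace, so that the Caffarelli--Silvestre correspondence (and its constant $d_s$) legitimately converts the variational identity into the weak form of \eqref{8}; the rest is routine functional analysis.
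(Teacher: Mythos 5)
Your proposal is correct and follows essentially the same route as the paper: the paper also solves the variational problem $B_\mu(U,V)=\langle g,V(\cdot,0)\rangle$ in $\mathcal{H_T}$ (via the Riesz representation theorem for the coercive symmetric form, which is the same mechanism as your Lax--Milgram argument), identifies $u=U(\cdot,0)$ as the weak periodic solution of \eqref{8}, and deduces $\|u\|_{H^s_T}\leq C\|U\|_T\leq C\|g\|_{L^2_T}$ from \eqref{6}--\eqref{7}. Your additional verifications (coercivity of $B_\mu$ for $\mu\geq\gamma$, the trace bound, and the identification of the minimizer as the $s$-harmonic extension) are just the details the paper leaves implicit.
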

\begin{proof}
We apply Riesz Representation Theorem to find a unique solution $U\in \mathcal{H_T}$ of
$$
B_\mu(U,V)=\langle g, V(x,0)\rangle, ~~\forall V\in \mathcal{H_T}.
$$
Moreover
$$
\|U\|_{T}\leq C\|g\|_{L^2_T}.
$$
Consequently $U(x,y)$ is the unique  weak periodic solution (periodic with respect to the variable $x$) of
$$
\begin{cases}
\mbox{div}(y^a\nabla    U)=0&\mbox{in}~~~~~
\mathbb{R}_+^{2},\\
\frac{\partial U}{\partial \nu^a}+k(x)U+\mu U=g(x)&\mbox{on}~~\partial \mathbb{R}_+^{2},\\
\end{cases}
$$
which means that $u(x)=U(x,0)$ is the unique periodic solution of (\ref{8}).
By (\ref{6})-(\ref{7}), we have
$$
\|u\|_{H^s_T}\leq C\|u\|_{\tilde{H}^s_T} \leq C\|U\|_{T}.
$$
So we derive
$$
\|u\|_{H^s_T}\leq C \|g\|_{L^2_T}.
$$
\end{proof}

We shall show the following Fredholm alternative.

\begin{thm}\label{thm2}(Second existence Theorem)
One of the following statements holds:

either

(i)for each  $g\in L^2_T$, there exists  a unique weak periodic  solution $u$ of
\begin{equation}\label{9}
Lu=g.
\end{equation}

or else

(ii)there exists a weak solution $u\not\equiv0$ of
\begin{equation}\label{10}
Lu=0.
\end{equation}

Furthermore, should (ii) hold, the dimension of the subspace $\mathcal{N}\subset L^2_T$ of weak solutions of  (\ref{10}) is finite.

Finally,  (\ref{9}) has a weak solution if and only if $\langle g,v\rangle=0, \forall v\in \mathcal{N}.$
\end{thm}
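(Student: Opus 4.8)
The plan is to reduce the statement to the Riesz--Schauder theory of compact linear operators, using Theorem~\ref{thm1} to produce a compact, self-adjoint resolvent on $L^2_T$. Fix once and for all a number $\mu>0$ with $\mu\geq\gamma$, where $\gamma$ is the constant of Theorem~\ref{thm1}, and set $K:=L^{-1}_\mu$. By Theorem~\ref{thm1}, $K:L^2_T\to H^s_T$ is bounded, with $\|Kg\|_{H^s_T}\leq C\|g\|_{L^2_T}$; composing with the compact embedding $H^s_T\hookrightarrow L^2_T$ (the periodic fractional Rellich theorem, obtained e.g.\ via Fourier series or a Fr\'echet--Kolmogorov argument bounding $L^2$-translations by the periodic Gagliardo seminorm; it also follows from (\ref{6})--(\ref{7}) together with compactness of the trace on $\mathcal H_T$), we conclude that $K:L^2_T\to L^2_T$ is compact. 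Moreover $K$ is self-adjoint on $L^2_T$: since $B_\mu$ is symmetric, the defining relation $B_\mu(\mbox{Ext}(Kg),V)=\langle g,V(x,0)\rangle$ applied with $V=\mbox{Ext}(Kf)$ and with $V=\mbox{Ext}(Kg)$ gives $\langle g,Kf\rangle=B_\mu(\mbox{Ext}(Kg),\mbox{Ext}(Kf))=B_\mu(\mbox{Ext}(Kf),\mbox{Ext}(Kg))=\langle f,Kg\rangle$.

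I then rewrite $Lu=g$. Adding $\mu u$ to both sides of the weak formulation, a periodic function $u\in H^s_T$ weakly solves $Lu=g$ if and only if it weakly solves $L_\mu u=g+\mu u$, that is $u=K(g+\mu u)$, i.e.\ $(I-\mu K)u=Kg$ in $L^2_T$. As $\mu K$ is compact, the Fredholm alternative for $I-\mu K$ gives: either $I-\mu K$ is bijective on $L^2_T$, so that for each $g\in L^2_T$ there is a unique $u=(I-\mu K)^{-1}Kg$, which moreover lies in $H^s_T$ since $Kg$ and $Ku$ do --- this is alternative~(i); or $\ker(I-\mu K)\neq\{0\}$ --- this is alternative~(ii). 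In the second case I identify the kernel: $u\in\ker(I-\mu K)$ iff $u=\mu Ku$ iff $u$ weakly solves $L_\mu u=\mu u$ iff $Lu=0$; hence $\ker(I-\mu K)=\mathcal N$, and Riesz--Schauder yields $\dim\mathcal N<\infty$ (and $\mathcal N\subset H^s_T$ by the same regularity remark).

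Finally, for the solvability criterion, the Fredholm alternative says that $(I-\mu K)u=Kg$ is solvable if and only if $Kg$ is orthogonal in $L^2_T$ to $\ker(I-\mu K^\ast)$, and since $K=K^\ast$ this kernel is exactly $\mathcal N$. For $v\in\mathcal N$ one has $Kv=\mu^{-1}v$, so self-adjointness gives $\langle Kg,v\rangle=\langle g,Kv\rangle=\mu^{-1}\langle g,v\rangle$; thus $Kg$ is orthogonal to all of $\mathcal N$ if and only if $\langle g,v\rangle=0$ for every $v\in\mathcal N$. Together with the equivalence between weak solvability of $Lu=g$ and solvability of $(I-\mu K)u=Kg$, this is precisely the asserted criterion.

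The one genuinely non-bookkeeping step is the compactness of $H^s_T\hookrightarrow L^2_T$ (equivalently $\tilde H^s_T\hookrightarrow L^2_T$ via (\ref{6})): one must turn finiteness of the periodic fractional seminorm into $L^2$-precompactness, which is where a Fr\'echet--Kolmogorov estimate, or a Rellich-type argument for the degenerate extension problem, is needed. The only other point requiring care is purely formal --- to obtain the clean condition $\langle g,v\rangle=0$ one must combine $K=K^\ast$ with the eigen-relation $Kv=\mu^{-1}v$ on $\mathcal N$, rather than stopping at the abstract orthogonality to $\ker(I-\mu K^\ast)$.
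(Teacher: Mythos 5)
Your proposal is correct and follows essentially the same route as the paper: both reduce $Lu=g$ to a fixed-point equation for the compact, self-adjoint resolvent ($A=\gamma L^{-1}_\gamma$ in the paper, $\mu L^{-1}_\mu$ with $\mu\geq\gamma$, $\mu>0$ in yours), invoke the Fredholm alternative, identify the kernel with $\mathcal N$, and convert orthogonality to $Kg$ into orthogonality to $g$ via self-adjointness and the eigen-relation on $\mathcal N$. The only differences are cosmetic: you make the self-adjointness of the resolvent and the choice $\mu>0$ (which the paper handles by noting $\gamma\neq0$ in case (ii)) explicit, and you flag the compact embedding $H^s_T\hookrightarrow L^2_T$, which the paper simply asserts.
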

\begin{proof}
Observe that (\ref{9}) is equivalent to the problem
\begin{equation}\label{11}
L_\gamma u=\gamma u+g.
\end{equation}
This equation can be written as
$$
u=L^{-1}_{\gamma}(\gamma u+g)=\gamma L^{-1}_{\gamma} u+L^{-1}_{\gamma}g.
$$
Denote
$$
A(u):=\gamma L^{-1}_{\gamma} u,~~~\hat{g}:= L^{-1}_{\gamma} g.
$$
From Theorem \ref{thm1} and the compact embedding of $H^s_T$ into $L^2_T$, we know that $A$ is a compact operator from $L^2_T$ into itself.
Moreover, the operator $A$ is self-adjoint.
Applying the Fredholm alternative, we obtain:

either

(i) for each $\hat{g}\in L^2_T$ the equation
 \begin{equation}\label{12}
u-Au=\hat{g}
 \end{equation}
 has a unique solution $u$

or else

(ii) the equation
\begin{equation}\label{13}
u-Au=0
 \end{equation}
  has nonzero solutions.

Should assertion (i) hold, then $u$ is the unique weak solution  of  (\ref{9}).
On the other hand, should assertion (ii) be valid, then necessarily $\gamma\neq0$ and it is well known that the dimension of the space $\mathcal{N}$
of the solutions is finite.
We check that (\ref{13}) holds if and only if $u$ is a weak solution of (\ref{10}).

Finally, we recall (\ref{12}) has a solution if and only if
$$
\langle \hat{g},v\rangle=0,~~\forall v\in \mathcal{N}.
$$
Note
$$
\langle \hat{g},v\rangle=\frac{1}{\gamma}\langle Ag, v\rangle=\frac{1}{\gamma}\langle g, Av\rangle=\frac{1}{\gamma}\langle g, v\rangle.
$$
Therefore (\ref{9}) has a weak solution if and only if $\langle g,v\rangle=0$  for any $ v\in \mathcal{N}.$
\end{proof}

We now state the result regarding the associated eigenvalue problem.

\begin{thm}\label{thm3}(Third existence Theorem)

(i) There exists an at most countable set of eigenvalues  $\Lambda\subset \mathbb{R}$ such that
\begin{equation}\label{14}
Lu=\lambda u+g
 \end{equation}
admits a unique weak periodic solution $u$  for each $g\in L^2_T$ if and only if
 $\lambda\not\in\Lambda$.

(ii) If $\Lambda$ is infinite, then $\Lambda=\{\lambda_j\}_{j=1}^\infty$, the values of a nondecreasing sequence with
$$
\lambda_j\rightarrow+\infty.
$$
\end{thm}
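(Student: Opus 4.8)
The plan is to follow the reduction used in the proof of Theorem~\ref{thm2} and then invoke the spectral theorem for compact self-adjoint operators. We may assume $\gamma>0$: if the constant of Theorems~\ref{thm1}--\ref{thm2} is $0$, replace it by any positive number, which only shrinks the admissible range of $\mu$ and leaves Theorem~\ref{thm1} valid. Rewrite \eqref{14} as $L_\gamma u=(\gamma+\lambda)u+g$, that is $u=(\gamma+\lambda)L^{-1}_\gamma u+L^{-1}_\gamma g$. For $\lambda=-\gamma$ this reduces to $u=L^{-1}_\gamma g$, which is uniquely solvable for every $g$, so $-\gamma\notin\Lambda$. For $\lambda\neq-\gamma$ it becomes
$$
u-\frac{\gamma+\lambda}{\gamma}\,Au=\hat g,\qquad \hat g:=L^{-1}_\gamma g,\quad A:=\gamma L^{-1}_\gamma,
$$
where $A$ is compact and self-adjoint on $L^2_T$ by the proof of Theorem~\ref{thm2}.

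Next I would apply the Fredholm alternative for $I$ minus a compact operator: the displayed equation has a unique solution for every $\hat g$ if and only if $I-\frac{\gamma+\lambda}{\gamma}A$ is injective, i.e. if and only if $\frac{\gamma}{\gamma+\lambda}$ is not an eigenvalue of $A$. Two facts describe these eigenvalues. First, $0$ is not an eigenvalue of $A$, since $L^{-1}_\gamma$ is injective. Second, $A$ is positive: for $u\neq0$, letting $U\in\mathcal{H_T}$ solve $B_\gamma(U,V)=\langle u,V(\cdot,0)\rangle$, one has $\langle Au,u\rangle=\gamma\langle U(\cdot,0),u\rangle=\gamma B_\gamma(U,U)\ge\gamma C\|U\|_T^2>0$. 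By the spectral theorem the eigenvalues of $A$ then form an at most countable set $\{\eta_k\}\subset(0,+\infty)$, each of finite multiplicity, with $0$ as the only possible accumulation point. Since $\lambda\mapsto\frac{\gamma}{\gamma+\lambda}$ is a bijection of $\mathbb{R}\setminus\{-\gamma\}$ onto $\mathbb{R}\setminus\{0\}$ with inverse $\eta\mapsto\gamma(1-\eta)/\eta$, the set $\Lambda:=\{\,\gamma(1-\eta_k)/\eta_k\,\}$ is at most countable and has the property stated in (i). One checks, if desired, that $\lambda\in\Lambda$ exactly when $Lu=\lambda u$ has a nontrivial weak solution, so $\Lambda$ is precisely the set of eigenvalues of $L$.

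For (ii), if $\Lambda$ is infinite then $A$ has infinitely many eigenvalues $\eta_k>0$, and since their only accumulation point is $0$ we have $\eta_k\to0^+$, whence $\lambda_k=\gamma(1-\eta_k)/\eta_k\to+\infty$. In particular, for every $M$ the interval $(-\infty,M]$ contains only finitely many elements of $\Lambda$, so $\Lambda$ may be enumerated as a strictly increasing---hence nondecreasing---sequence $\{\lambda_j\}_{j=1}^\infty$ with $\lambda_j\to+\infty$. I expect the only non-routine point to be the positivity of $A$: this is where the coercivity of $B_\gamma$ enters and is what forces the eigenvalues $\lambda_j$ to tend to $+\infty$ rather than $-\infty$; the remaining steps are the bookkeeping that transfers spectral information from $A$ to the $\lambda$-variable, together with the spectral theorem already used for Theorem~\ref{thm2}. (In fact, since $L^{-1}_\gamma$ is injective and $L^2_T$ is infinite-dimensional, $A$ must have infinitely many eigenvalues, so alternative (ii) always occurs here.)
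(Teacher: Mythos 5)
Your proposal is correct and follows essentially the paper's own route: rewrite \eqref{14} as $u=\frac{\gamma+\lambda}{\gamma}Au+\hat g$ with $A=\gamma L^{-1}_{\gamma}$ compact and self-adjoint on $L^2_T$, apply the Fredholm alternative, and read off the exceptional values $\lambda$ as the images of the eigenvalues $\eta$ of $A$ under $\eta\mapsto\gamma(1-\eta)/\eta$, with $\lambda_j\rightarrow+\infty$ because the nonzero eigenvalues of $A$ can only accumulate at $0$. The one small divergence is that the paper excludes $\lambda\leq-\gamma$ at the outset via Theorem \ref{thm1} and then uses the accumulation-at-zero argument, whereas you instead prove the positivity of $A$ (so all $\eta_k>0$, hence $\lambda_k>-\gamma$), which gives the same conclusion; your closing remark that alternative (ii) always occurs, since $A$ is injective on the infinite-dimensional space $L^2_T$, is a correct additional observation not needed for the statement.
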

\begin{proof}
From Theorem \ref{thm1} we know that (\ref{14}) admits weak solutions for each $g\in L^2_T$ when $\lambda\leq-\gamma$.
Hence we only need to consider $\lambda>-\gamma$.
Assume also with no loss of generality that $\gamma>0$.

According to the Fredholm alternative, (\ref{14}) has a unique weak solution for each $g\in L^2_T$
if and only if $u\equiv0$ is the only weak solution of the homogeneous problem $Lu=\lambda u$.
This is in turn true if and only if $u\equiv0$
is the only weak solution of
$$
L_\gamma u=(\gamma+\lambda)u.
$$
This holds exactly when
\begin{equation}\label{15}
u=L^{-1}_{\gamma}(\gamma+\lambda) u=\frac{\gamma+\lambda}{\gamma}Au,
 \end{equation}
where, as in the proof of Theorem 2.2, we have set $Au=\gamma L^{-1}_{\gamma}u$.
Here we recall  that $A: L^2_T\rightarrow L^2_T$ is a bounded, linear compact operator.

Now if $u\equiv0$ is the only solution of (\ref{15}), we know that
\begin{equation}\label{16}
\frac{\gamma}{\gamma+\lambda} ~\mbox{is not an eigenvalue of}~ A.
 \end{equation}

Consequently we see (\ref{14}) has a unique weak solution for each $g\in L^2_T$ if and only if (\ref{16}) holds.

In view of the collection of all eigenvalues of $A$, we know that it either  comprises of a finite set or  it only has an accumulative point  zero.
In the second case we see, according to (\ref{15}) and the fact $\lambda>-\gamma$, that (\ref{14}) has a unique weak solution for all
$g\in L^2_T$, except for a sequence $\lambda_j\rightarrow+\infty.$

\end{proof}

Similar existence theorems can be found in \cite{evans} for Dirichlet boundary value problem in the standard Laplacian case.

\section{lower bound of periods}

The authors and  Zhang in \cite{gzd} obtained the existence of periodic solutions with any period $T>T_0$ to (\ref{3}).
However,  the optimal value  of $T_0$ was not determined in \cite{gzd}.
Denote
$$
T_0:=\inf\{T: (\ref{3})~ \mbox{has periodic solutions with period } ~T\},
$$
we will prove that  $T_0\leq2\pi\times \left(\frac{1}{-F''(0)}\right)^{\frac{1}{2s}}$ in this section by using Hopf bifurcation theory.

To this end we need to consider the following eigenvalue problem
\begin{equation}\label{17}
(-\partial_{xx})^s  \varphi(x)  =\lambda \varphi(x),~~~~\varphi(x+2\pi)=\varphi(x).
 \end{equation}
We know that its first eigenvalue is zero and corresponding eigenfunctions are any constant-value functions.
The eigenfunctions correspond to the other eigenvalues must change sign  at least once in one period.
So we may suppose that $\varphi(0)=0$ in (\ref{17}).
The following lemma shows the simplicity property of  all eigenvalues of (\ref{17}).

\begin{lem}\label{lem2}
All nonzero eigenvalues of (\ref{17}) are
$$
\lambda_{m+1}=m^{2s},~~m=1,2,\cdots.
$$
Moreover, they are all simple and the space of eigenfunctions of  $\lambda_{m+1}$ is $\mbox{span}\{\varphi_{m+1}(x)\}=\mathbb{R} \sin(mx)$.
\end{lem}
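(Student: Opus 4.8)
The plan is to diagonalize $(-\partial_{xx})^s$ on $2\pi$-periodic functions by Fourier series. On $L^2_{2\pi}$ the family $\{e^{inx}\}_{n\in\mathbb{Z}}$ is a complete orthogonal system, and since $(-\partial_{xx})^s$ is the Fourier multiplier with symbol $|\xi|^{2s}$ one has $(-\partial_{xx})^s e^{inx}=|n|^{2s}e^{inx}$. First I would verify this identity directly from the singular-integral representation already recorded in Section 2, namely $(-\partial_{xx})^s\varphi(x)=C(s)\int_{\mathbb{R}}\frac{\varphi(x)-\varphi(\bar{x})}{|x-\bar{x}|^{1+2s}}\,d\bar{x}$: plugging in $\varphi(x)=e^{inx}$, factoring out $e^{inx}$, changing variables $z=\bar{x}-x$, and using that the odd part of $1-e^{inz}$ integrates to zero gives $(-\partial_{xx})^s e^{inx}=C(s)\,e^{inx}\int_{\mathbb{R}}\frac{1-\cos(nz)}{|z|^{1+2s}}\,dz=|n|^{2s}\,e^{inx}$, the last step because $C(s)$ is precisely normalized so that $C(s)\int_{\mathbb{R}}\frac{1-\cos w}{|w|^{1+2s}}\,dw=1$.

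Next, given an eigenfunction $\varphi\in H^s_{2\pi}$ of (\ref{17}) with eigenvalue $\lambda$, I would expand $\varphi(x)=\sum_{n}c_n e^{inx}$ in $L^2_{2\pi}$, apply $(-\partial_{xx})^s$ term by term, and compare Fourier coefficients on both sides of $(-\partial_{xx})^s\varphi=\lambda\varphi$. This yields $(|n|^{2s}-\lambda)c_n=0$ for every $n\in\mathbb{Z}$, so $c_n=0$ unless $|n|^{2s}=\lambda$. For $\lambda\neq0$ the set $\{n:|n|^{2s}=\lambda\}$ is nonempty only when $\lambda=m^{2s}$ for some positive integer $m$, in which case it equals $\{m,-m\}$. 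Hence the nonzero eigenvalues are exactly $\lambda_{m+1}:=m^{2s}$, $m\geq1$ (indexed after the first eigenvalue $\lambda_1=0$), and the corresponding eigenspace in $L^2_{2\pi}$ is two-dimensional, spanned over $\mathbb{R}$ by $\cos(mx)$ and $\sin(mx)$.

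To obtain simplicity I would invoke the normalization $\varphi(0)=0$ justified just before the statement (every eigenfunction for a nonzero eigenvalue changes sign, hence may be translated to vanish at the origin). A general real eigenfunction for $\lambda_{m+1}$ has the form $a\cos(mx)+b\sin(mx)$, and $\varphi(0)=0$ forces $a=0$; thus $\varphi\in\mathbb{R}\sin(mx)$, so $\lambda_{m+1}$ is simple with $\varphi_{m+1}(x)=\sin(mx)$.

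The computation is essentially routine; the only point needing care is the justification that the multiplier identity $(-\partial_{xx})^s e^{inx}=|n|^{2s}e^{inx}$, valid a priori only on suitable function spaces, may be applied termwise to $\varphi$ — this follows from the regularity bootstrap for (\ref{17}), which forces $\varphi\in C^\infty$ and hence rapid decay of its Fourier coefficients. Alternatively one can bypass this entirely by appealing to the spectral theorem: $(-\partial_{xx})^s$ is self-adjoint on $L^2_{2\pi}$ with compact resolvent and orthonormal eigenbasis $\{(2\pi)^{-1/2}e^{inx}\}$, so any eigenfunction lies in the span of the $e^{inx}$ sharing its eigenvalue, and the remainder of the argument is unchanged.
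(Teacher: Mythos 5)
Your proof is correct, but it follows a genuinely different route from the paper's. You diagonalize $(-\partial_{xx})^s$ directly on the trigonometric basis, verifying the multiplier identity $(-\partial_{xx})^s e^{inx}=|n|^{2s}e^{inx}$ from the singular-integral representation (which, given that the operator is introduced as the Fourier multiplier of symbol $|\xi|^{2s}$, is essentially a consistency check), then compare Fourier coefficients and use the normalization $\varphi(0)=0$ to cut the two-dimensional span of $\cos(mx),\sin(mx)$ down to $\mathbb{R}\sin(mx)$. The paper instead works through the Caffarelli--Silvestre extension: it builds the explicit $s$-harmonic extension of each mode as $J_m(y)=\mu y^{s}J_s(my)$ via modified Bessel functions, checks that this profile solves the degenerate equation $\mathrm{div}(y^a\nabla\Phi)=0$, and computes $\lim_{y\downarrow 0}y^a\partial_y J_m(y)=-C_s m^{2s}$ so that the Dirichlet-to-Neumann map returns $m^{2s}$ on each mode; the expansion of a general zero-mean eigenfunction and the $\varphi(0)=0$ normalization then finish exactly as in your argument. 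Your approach is shorter and more elementary, avoiding Bessel-function asymptotics, and you are careful about the one point that genuinely needs justification (termwise application of the operator, handled either by regularity or by self-adjointness with compact resolvent). The paper's computation buys the explicit extension profiles, which keep the argument inside the extension framework used throughout and also feed the general statement recorded right after the lemma about eigenvalues of $[(-\Delta)+V]^s$. Both proofs agree on the key point that ``simplicity'' is meant after the normalization $\varphi(0)=0$, i.e.\ in the space of periodic functions vanishing at the origin, since in the full periodic space each nonzero eigenvalue has the two-dimensional eigenspace you identify.
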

\begin{proof}
Suppose $J_\alpha(y)$ and $I_\alpha(y)$ are the second and first   modified Bessel functions respectively, namely they are two linearly independent
 solutions to the modified Bessel's equation
$$
y^2v''(y)+yv'(y)-(y^2+\alpha^2)v(y)=0.
$$
It is well-known that
\begin{equation}\label{18}
 I_\alpha(y)=\sum_{j=0}^\infty \frac{1}{j!\Gamma(j+\alpha+1)}\left(\frac{y}{2}\right)^{2j+\alpha}
\end{equation}
and
\begin{equation}\label{19}
J_\alpha(y)=\frac{\pi}{2}\frac{I_{-\alpha}(y)-I_\alpha(y)}{\sin(\alpha \pi)},
\end{equation}
when $\alpha$ is not an integer.
For $m\in N$, we set
\begin{equation}\label{20}
J_m(y):=\mu y^\gamma J_s(m y),
\end{equation}
where the parameters  $\gamma$ and $\mu$ will be determined later.
Elementary computation shows that
\begin{eqnarray}\nonumber
&&J''_m(y)+\frac{a}{y}J'_m(y)-m^2 J_m(y)\\
\nonumber =&&
\mu y^{\gamma-2}\{(my)^2J''_s(my)+(2\gamma+a)(my)J'_s(my)
\\
\nonumber &&
+[\gamma(\gamma-1)+a\gamma-(my)^2]J_s(my)\}.
\end{eqnarray}
We set $\gamma=\frac{1-a}{2}=s$, then $\gamma(\gamma-1)+a\gamma=-\gamma^2=-s^2$.
Hence
$$
J''_m(y)+\frac{a}{y}J''_m(y)-m^2 J_m(y)=0.
$$
Simple computation shows that
$J_m(0)=\mu \frac{\pi2^{s}m^{-s}}{2\Gamma(1-s)\sin(s \pi)}$.
Hence we can obtain $J_m(0)=1$, provided that the parameter $\mu$ be chosen as $\frac{2^{1-s}\Gamma(1-s)m^s\sin(s \pi)}{\pi }$.
From (\ref{18})-(\ref{20}), for any positive integer $m$, we can verify that the following limit exists and we denote it as
$$
\lim_{y\downarrow0}y^a\frac{\partial J_m(y)}{\partial y}=:-C_s\lambda_{m+1},
$$
where
$$
\lambda_{m+1}=m^{2s},
~~C_s=\frac{2s}{2^{2s}}\times\frac{\Gamma(1-s)}{\Gamma(1+s)}.
$$
  Here
we have used the fact $s\in(0,1)$.
Note that $C_s=\frac{1}{d_s}$.

 For a non-constant periodic solution $\varphi$ of (\ref{17}),
one has
$\int_0^{2\pi}\varphi(x)dx=0,$
since it is orthogonal with constant-value functions in $L^2$ sense.
By the completeness of orthogonal basis $\{\sin(mx),~\cos(mx)\}|_{m=0}^\infty$ in $L_{2\pi}^2$, we can write
$$
\varphi(x)=\sum_{m=1}^\infty [a_m\sin(mx)+b_m\cos(mx)].
$$
Now we define
$$
\Phi(x,y)=\sum_{m=1}^\infty J_m(y)[a_m\sin(mx)+b_m\cos(mx)],
$$
then $\Phi$ satisfies
$$
\begin{cases}
\mbox{div}(y^a\nabla    \Phi)=0 \indent &\mbox{in}~~
\mathbb{R}_+^{2},\\
 \Phi(x,0)=\varphi(x).\\
\end{cases}
$$
Therefore
$$
(-\partial_{xx})^s \varphi(x)=\lim_{y\downarrow0}-d_sy^a\frac{\partial \Phi}{\partial y}=\sum_{m=1}^\infty \lambda_{m+1}[a_m\sin(mx)+b_m\cos(mx)].
$$
This and (\ref{17}) give that
$$
\sum_{m=1}^\infty \lambda_{m+1}[a_m\sin(mx)+b_m\cos(mx)]=\sum_{m=1}^\infty \lambda[a_m\sin(mx)+b_m\cos(mx)].
$$
Hence  all non-zero eigenvalues of (\ref{17}) are exactly $\lambda_{m+1}(m=1,2,\cdots)$ and they are all simple.
In fact, the space of eigenfunctions of the $(m+1)$-eigenvalue $\lambda_{m+1}$ is $\mbox{span}\{\varphi_{m+1}(x)\}=\mathbb{R} \sin(mx)$,
recalling that $\varphi_{m+1}(0)=0$.

\end{proof}

Similar arguements also lead to the following  general proposition about eigenvalues and eigenfunctions for Schrodinger type operators involving fractional Laplacians.

\begin{thm}
Consider  a classical  eigenvalue problem for  a Hilbert space ${\mathcal H}$
\begin{equation}\label{general-eig}
-\Delta  \phi + V(x) \phi = \lambda \phi,
\end{equation}
where $V\geq0$. If the eigenfunctions $\phi_{m} $ with eigenvalue $\lambda_m,  m = 1, 2, \cdots,$   form a complete orthonormal  basis of ${\mathcal H}$,  then

\begin{equation}\label{fractional-eig}
[(-\Delta )+ V(x)]^s \phi = \lambda \phi
\end{equation}
has the same eigenfunctions $\phi_{m} $ with eigenvalues
$$
\lambda_{s, m}=(\lambda_m)^{s}, \quad m=1,2,\cdots.
$$
In particular,  if $\lambda_m$ is simple,  then $\lambda_{s, m}$ is  simple.
\end{thm}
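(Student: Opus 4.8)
The plan is to follow the proof of Lemma \ref{lem2}, replacing the concrete eigenvalues $m^{2}$ of $-\partial_{xx}$ on $[0,2\pi]$ by the abstract eigenvalues $\lambda_m$ of $A:=-\Delta+V$, and using the generalized Caffarelli--Silvestre extension for the operator $A$ rather than for $-\Delta$ alone. For each $m$ I would first solve the one–dimensional profile equation
$$
v_m''(y)+\frac{a}{y}v_m'(y)-\lambda_m v_m(y)=0,\qquad y>0,\quad a=1-2s,
$$
normalized by $v_m(0)=1$ and $v_m(y)\to 0$ as $y\to+\infty$. Since $V\ge 0$ forces $\lambda_m\ge 0$, the substitution $z=\sqrt{\lambda_m}\,y$ turns this into exactly the equation handled in Lemma \ref{lem2} with the frequency $m$ replaced by $\sqrt{\lambda_m}$, so $v_m(y)=\mu_m\,y^{s}J_s(\sqrt{\lambda_m}\,y)$ for a suitable normalizing constant $\mu_m$, and the very same Bessel computation gives
$$
\lim_{y\downarrow 0} y^{a}\,v_m'(y) = -C_s\,(\lambda_m)^{s},\qquad C_s=\frac{1}{d_s}.
$$

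Next I would form the extension $\Phi(x,y):=\sum_m c_m\, v_m(y)\,\phi_m(x)$ for $\phi=\sum_m c_m\phi_m$, and check that it solves the extension problem associated with $A$, namely $\partial_{yy}\Phi+\tfrac{a}{y}\partial_y\Phi - A_x\Phi=0$ for $y>0$ with $\Phi(\cdot,0)=\phi$; this holds termwise because $A_x\!\left(v_m(y)\phi_m\right)=\lambda_m v_m(y)\phi_m=\bigl(v_m''+\tfrac{a}{y}v_m'\bigr)\phi_m$. The fractional operator is then recovered as the weighted Dirichlet--to--Neumann map, $[(-\Delta)+V]^s\phi=-d_s\lim_{y\downarrow0}y^a\partial_y\Phi$, and inserting the series together with the profile asymptotics above yields
$$
[(-\Delta)+V]^s\phi_m = -d_s\bigl(-C_s(\lambda_m)^{s}\bigr)\phi_m = d_sC_s(\lambda_m)^{s}\phi_m=(\lambda_m)^{s}\phi_m .
$$
By completeness of $\{\phi_m\}$ in $\mathcal H$ there are no other eigenpairs, so the spectrum of $[(-\Delta)+V]^s$ is exactly $\{(\lambda_m)^{s}\}$ with the same eigenfunctions $\phi_m$.

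For the last assertion, since $t\mapsto t^{s}$ is strictly increasing (hence injective) on $[0,\infty)$, the eigenspace of $[(-\Delta)+V]^s$ attached to the value $(\lambda_m)^{s}$ is the linear span of those $\phi_j$ with $(\lambda_j)^{s}=(\lambda_m)^{s}$, i.e.\ with $\lambda_j=\lambda_m$. Hence if $\lambda_m$ is simple as an eigenvalue of $A$, this span is one–dimensional, so $\lambda_{s,m}=(\lambda_m)^{s}$ is simple.

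I expect the main obstacle to be functional–analytic bookkeeping rather than any new idea: justifying convergence of the series defining $\Phi$, termwise differentiation in $y$, and interchange of the limit $y\downarrow 0$ with the infinite sum, all in the norm of $\mathcal H$; and confirming that $v_m$ is indeed the unique bounded solution of the profile ODE with the prescribed boundary behaviour, whose near–$0$ asymptotics are precisely those established in Lemma \ref{lem2}. Alternatively one may bypass the extension altogether and invoke the spectral theorem: $[(-\Delta)+V]^s$ is, by definition of the fractional power of the nonnegative self–adjoint operator $A$, the closure of $\phi_m\mapsto(\lambda_m)^{s}\phi_m$, which makes the computation of eigenpairs immediate; I would retain the extension argument mainly to keep the proof consistent with the method used for Lemma \ref{lem2} and to exhibit the identity $d_sC_s=1$ explicitly.
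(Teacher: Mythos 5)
Your proposal is correct and follows essentially the same route the paper intends: the paper offers no separate proof, stating only that "similar arguments" to Lemma \ref{lem2} apply, and your argument is exactly that generalization — the extension problem for $-\Delta+V$ with the Bessel profile $y^{s}J_s(\sqrt{\lambda_m}\,y)$, the Dirichlet-to-Neumann limit giving $(\lambda_m)^s$, and completeness of $\{\phi_m\}$ for the simplicity claim. Your closing remark that the spectral theorem makes the result immediate is also accurate, but it does not change the fact that your main argument matches the paper's approach.
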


Next we shall study the periodic solution of the nonlinear problem.

\begin{thm}\label{thm4}
Assume that $F$ satisfies conditions (\ref{4})-(\ref{5}) and
$F''(0)<0.$ Then  $T_0\leq2\pi\times \left(\frac{1}{-F''(0)}\right)^{\frac{1}{2s}}$.
\end{thm}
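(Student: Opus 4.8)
The plan is to obtain the nonconstant periodic solutions of \eqref{3} by bifurcation from the trivial solution $u\equiv0$, treating the period as the bifurcation parameter; this is the rigorous (Crandall--Rabinowitz / Lyapunov--center) incarnation of the Hopf bifurcation picture. First I would rescale to a fixed period: putting $u(x)=w(\mu x)$ with $\mu=2\pi/T$ and $\nu:=\mu^{-2s}=(T/2\pi)^{2s}$ turns \eqref{3} into
$$(-\partial_{xx})^s w+\nu F'(w)=0,\qquad w(x+2\pi)=w(x).$$
A nonconstant $2\pi$-periodic solution $w$ at parameter $\nu$ corresponds to a nonconstant solution of \eqref{3} of period $T=2\pi\,\nu^{1/(2s)}$; hence if such $w$ exist along a sequence $\nu\to\nu_0$, then $T_0\le 2\pi\,\nu_0^{1/(2s)}$ by definition of the infimum, and it remains to identify $\nu_0=-1/F''(0)$.

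For the bifurcation setup, note $F'(0)=0$ by \eqref{4}--\eqref{5}, so $w\equiv0$ solves the rescaled equation for every $\nu$. Fixing $\beta>0$ and using the solution operator $L_\beta^{-1}$ of Theorem \ref{thm1} with $k\equiv0$ and period $2\pi$, I would recast the equation as $\Phi(\nu,w)=0$ with
$$\Phi(\nu,w):=w-L_\beta^{-1}\!\bigl(\beta w-\nu F'(w)\bigr),$$
posed on the subspace $X$ of \emph{even} $2\pi$-periodic functions, which is invariant under $\Phi$ and on which the relevant eigenvalue is simple (this factors out the translation invariance). By Theorem \ref{thm1} and the compact embedding $H^s_{2\pi}\hookrightarrow L^2_{2\pi}$, $L_\beta^{-1}$ is compact, so $D_w\Phi(\nu,0)h=h-\bigl(\beta-\nu F''(0)\bigr)L_\beta^{-1}h$ is a compact perturbation of the identity, hence Fredholm of index $0$, and it is self-adjoint in $L^2_{2\pi}$. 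Since the eigenvalues of $(-\partial_{xx})^s$ on $2\pi$-periodic functions are $m^{2s}$, $m\ge0$, with $\cos(mx)$ spanning the even part (Lemma \ref{lem2}), one computes $D_w\Phi(\nu,0)\cos(mx)=\tfrac{m^{2s}+\nu F''(0)}{m^{2s}+\beta}\cos(mx)$; this kernel is first nontrivial at $\nu_0:=-1/F''(0)$, where it equals $\mathrm{span}\{\cos x\}$ (one-dimensional, by the simplicity in Lemma \ref{lem2}). The transversality condition amounts to $\partial_\nu D_w\Phi(\nu_0,0)[\cos x]=F''(0)L_\beta^{-1}\cos x=\tfrac{F''(0)}{1+\beta}\cos x\notin\mathrm{Range}\,D_w\Phi(\nu_0,0)$, which holds because that range is the $L^2_{2\pi}$-orthogonal complement of $\cos x$ and $F''(0)\ne0$.

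The Crandall--Rabinowitz theorem then yields a $C^1$ curve $\epsilon\mapsto(\nu(\epsilon),w(\epsilon))$, $|\epsilon|<\delta$, with $\nu(0)=\nu_0$, $w(0)=0$, and $w(\epsilon)=\epsilon\cos x+o(\epsilon)$ in $X$; for $0<|\epsilon|<\delta$, $w(\epsilon)$ has a nonzero $\cos x$ Fourier mode, so it is nonconstant with fundamental period $2\pi$ and satisfies $|w(\epsilon)|<1$, and by elliptic bootstrapping on the extension problem it is a classical solution. Undoing the scaling gives nonconstant periodic solutions of \eqref{3} of period $T(\epsilon)=2\pi\,\nu(\epsilon)^{1/(2s)}\to2\pi\,\nu_0^{1/(2s)}=2\pi\bigl(1/(-F''(0))\bigr)^{1/(2s)}$ as $\epsilon\to0$, whence $T_0\le 2\pi\bigl(1/(-F''(0))\bigr)^{1/(2s)}$.

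I expect the main obstacle to be the functional-analytic bookkeeping needed to make $\Phi$ a $C^2$ map with $D_w\Phi(\nu_0,0)$ genuinely Fredholm and with a one-dimensional kernel: in particular, choosing a space on which $w\mapsto F'(w)$ is a smooth Nemytskii operator despite the weak smoothing of $(-\partial_{xx})^s$ for small $s$ (one may instead work in periodic Hölder spaces with periodic Schauder estimates, or directly with the extension space $\mathcal H_T$), and then passing between the weak and classical formulations. By contrast, the eigenvalue computation, the transversality check, and the scaling argument are all routine given Lemma \ref{lem2}.
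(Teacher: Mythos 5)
Your proposal is correct, and its core is the same as the paper's: rescale so that the period is fixed at $2\pi$ and the coefficient $\nu=\lambda/(-F''(0))$ becomes the bifurcation parameter, then apply Crandall--Rabinowitz at the simple eigenvalue $1=1^{2s}$ of $(-\partial_{xx})^s$ coming from Lemma \ref{lem2}, with the same transversality computation. You differ in two worthwhile ways. First, you factor out translations by restricting to \emph{even} $2\pi$-periodic functions (kernel $\mathbb{R}\cos x$), whereas the paper works in $H^s_{2\pi,0}=\{u(0)=0\}$ (kernel $\mathbb{R}\sin x$); your choice is actually the more robust one, since the even subspace is genuinely invariant under $w\mapsto(-\partial_{xx})^sw+\nu F'(w)$ for arbitrary $F$, while the constraint $u(0)=0$ is not obviously preserved by $(-\partial_{xx})^s u+\nu F'(u)$. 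Second, and more substantively, you stop at \emph{local} bifurcation: the branch $(\nu(\epsilon),w(\epsilon))$ with $w(\epsilon)=\epsilon\cos x+o(\epsilon)$ gives nonconstant solutions whose minimal periods $T(\epsilon)=2\pi\nu(\epsilon)^{1/(2s)}$ tend to $2\pi(1/(-F''(0)))^{1/(2s)}$, and since $T_0$ is an infimum this already yields the claimed bound. The paper instead invokes the Rabinowitz global bifurcation theorem, rules out (with little detail) the alternative that the branch meets another eigenvalue, and uses the a priori bound $|u|<1$ to push $\lambda\to+\infty$; that buys the stronger conclusion that solutions exist for \emph{every} period $T>2\pi(1/(-F''(0)))^{1/(2s)}$, which your local argument does not give but which is not needed for the statement as written. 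The one technical point you flag — making $\Phi$ a $C^2$ map despite the Nemytskii operator's limited smoothness on $L^2$/$H^s$ (remediable by truncating $F$ outside $[-1,1]$, or by working in periodic H\"older or extension spaces) — is glossed over in the paper as well (it simply asserts $G\in C^2(\mathbb{R}\times H^s_{2\pi,0},L^2_{2\pi,0})$), so your treatment is at the same level of rigor, and your reformulation via $L_\beta^{-1}$ and Theorem \ref{thm1} makes the compactness and Fredholm-index-zero structure explicit.
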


\begin{proof}
 For $\lambda>0$, let $x=\left(\frac{\lambda}{-F''(0)}\right)^{\frac{1}{2s}}\bar{x}$ and
$$
u(x)=u\left(\left(\frac{\lambda}{-F''(0)}\right)^{\frac{1}{2s}}\bar{x}\right)=:\bar{u}(\bar{x}),
$$
then (\ref{3}) is equivalent to
$$
(-\partial_{\bar{x}\bar{x}})^s  \bar{u}(\bar{x})+\frac{\lambda}{-F''(0)}F'(\bar{u}(\bar{x}))=0,\indent
        \bar{x}\in \mathbb{R}.
$$
For simplicity, we write this equation as
\begin{equation}\label{22}
    (-\partial_{xx})^s  u(x)+\frac{\lambda}{-F''(0)}F'(u(x))=0,\indent
        x\in \mathbb{R}.
\end{equation}

We want to  prove that (\ref{22}) admits periodic solutions  with period $2\pi$   for  $\lambda>\lambda_2=1$.
If this is done, then we obtain periodic solutions of (\ref{3}) with period $T>2\pi\times \left(\frac{1}{-F''(0)}\right)^{\frac{1}{2s}}.$
Then we obtain that the lower bound $T_0$ of periods  satisfies $T_0\leq2\pi\times \left(\frac{1}{-F''(0)}\right)^{\frac{1}{2s}}.$

Set functional
$$
G(\lambda, u):= (-\partial_{xx})^s  u(x)+\frac{\lambda}{-F''(0)}F'(u(x)).
$$
We define
$$
H^s_{T,0}:=\{u\in H^s_T, u(0)=0\},~~L^2_{T,0}:=\{u\in L^2_T, u(0)=0\}.
$$
We have $G(\lambda, u)\in C^2(\mathbb{R}\times H^s_{2\pi,0}, L^2_{2\pi,0})$.
Since $F$ satisfies condition (\ref{5}), we see that
$$
G(\lambda, 0)=0,~~~\forall~\lambda\in\mathbb{R}.
$$
Clearly
$$
G_u(\lambda, 0)= (-\partial_{xx})^s-\lambda.
$$
We next show that $\lambda_2$ is a bifurcation point for $G$. To this end we need to prove that $G_u(\lambda_2, 0)$ is a Fredholm map with one-dimensional kernel and index zero. Set
$$
V:=\mbox{ker}(G_u(\lambda_2, 0)),~R:=R(G_u(\lambda_2, 0)).
$$
From Lemma \ref{lem2} we know that $\dim(V)=1$ and $V=\mbox{span}\{\varphi_2(x)\}=\mathbb{R} \sin x$.
Choose $\varphi_2(x)=\frac{\sin x}{\sqrt{\pi}}$ so that $\int_{-\pi}^\pi  \varphi^2_2 dx=1$.

By Fredholm Alternative of compact operators, one deduces that $R=\{\mbox{ker}(G_u(\lambda_2, 0))\}^\perp$, which yields $\mbox{codim}(R)=1.$

Observe that
$
G_{u\lambda} (\lambda_2, 0)=-I,
$
so $G_{u\lambda} (\lambda_2, 0)\varphi_2=-\varphi_2\not\in R.$

From the classic bifurcation theory (see Theorem 1.7 in  \cite{crandall}),
we know that $\lambda_2$ is a bifurcation point for $G$.

By the  Rabinowitz global bifurcation
theorem(\cite{rabinowitz}), either the bifurcating branch is unbounded, or it meets another eigenvalue $\lambda_m(m\neq2)$ of the operator $G_u(\lambda,0)$.
We can rule out the latter case, so we conclude that the bifurcating branch is unbounded. Since $F$ satisfies conditions (\ref{4})-(\ref{5}), then any periodic solution $u$ of (\ref{22}) must have  $|u|<1$, hence we deduce that the first component $\lambda$ in the bifurcating branch $(\lambda,u)$ must increasing to positive infinity. This implies that there exists a periodic solution for
$\lambda>\lambda_2$

\end{proof}

\begin{rem}

1). The other eigenvalues $\lambda_{m+1}(m>1)$ are also bifurcation points of $G$.
Moreover, we can obtain the same result of Theorem \ref{thm4}, by dealing with these bifurcation points $\lambda_{m+1}$.
The difference is that we need to  prove that (\ref{22}) admits periodic solutions  with period $\frac{2\pi}{m}$   for  $\lambda>\lambda_{m+1}$.
From the proof of Theorem \ref{thm4}, we know that (\ref{3}) admits periodic solutions   with period $T>\frac{2\pi}{m}\times \left(\frac{\lambda_{m+1}}{-F''(0)}\right)^{\frac{1}{2s}}.$
Combining this and the result $\lambda_{m+1}=m^{2s}$, we also obtain $T_0\leq2\pi\times \left(\frac{1}{-F''(0)}\right)^{\frac{1}{2s}}$.

2). When $s=1$, one has $T_0=2 \pi /  \sqrt{-F''(0)}$ (see \cite{gzd}).
Furthermore, for the specific nonlinear function $F(u)=\frac{(1-u^2)^2}{4}$, we see that $F''(0)=-1$, which yields $T_0=2\pi$ (see \cite{Ambrosetti}, chapter 5).

3). Furthermore if $F$ satisfies
$F^{(3)}(0)=0$ and $F^{(4)}(0)>0,$
then any bifurcation points are supercritical locally.
\end{rem}

Next we shall only specify the behavior of the bifurcating branch near $(\lambda_2, 0)$, since the other bifurcation points are similar.

Since $\mbox{codim}(R)=1$,  there exists a linear functional $\varsigma\in (L^2_{2\pi,0})^\ast,~\varsigma\neq0$, such that
$$
R=\left\{u\in L^2_{2\pi,0}: \langle \varsigma, u\rangle=0\right\}.
$$
On the other hand, by $R=V^\perp$, we have
$$
R=\left\{u\in L^2_{2\pi,0}: \int_{-\pi}^\pi u \varphi_2dx=0\right\}.
$$
Hence we can define
$$
\langle \varsigma, u\rangle:=\int_{-\pi}^\pi u \varphi_2 dx.
$$

Note that $G_{uu}(\lambda, u)=\frac{\lambda}{-F''(0)}F^{(3)}(u)$, then by the condition $F^{(3)}(0)=0$, we have $G_{uu}(\lambda_2, 0)=0$, which yields to
$$
\langle \varsigma, G_{uu}(\lambda_2, 0)[\varphi_2,\varphi_2] \rangle=0.
$$
This eliminates the transcritical case, namely the first component $\lambda$ in the bifurcating branch $(\lambda,u)$ has only two possibilities:
$\lambda>\lambda_2$ (supercritical) or $\lambda<\lambda_2$ (subcritical).
We will show that it must be supercritical case.

Since $G_{u\lambda} (\lambda_2, 0)=-I,$
we have
$$
\langle \varsigma, G_{u\lambda} (\lambda_2, 0)\varphi_2 \rangle=\langle \varsigma, -\varphi_2 \rangle=-\int_{-\pi}^\pi \varphi^2_2 dx=-1.
$$
We also have
$$
\langle \varsigma, G_{uuu}(\lambda_2, 0)[\varphi_2]^3 \rangle=\frac{\lambda_2 F^{(4)}(0)}{-F''(0)}\int_{-\pi}^\pi  \varphi^4_2 dx>0.
$$
Therefore
$$
\frac{\langle \varsigma, G_{uuu}(\lambda_2, 0)[\varphi_2]^3 \rangle}{-6\langle \varsigma, G_{u\lambda} (\lambda_2, 0)\varphi_2 \rangle}
=
\frac{\langle \varsigma, G_{uuu}(\lambda_2, 0)[\varphi_2]^3 \rangle}{6}>0.
$$
By the standard bifurcation theory (see Chapter 5, \cite{Ambrosetti}), we obtain the supercriticality  result.

\section{Hamiltonian estimates}

We will first establish Hamiltonian identity for periodic solutions of (\ref{3}).
Similar Hamiltonian identity can be found in \cite{cabre2} and \cite{gz}.

\begin{thm}\label{thm5}(Hamiltonian identity)
Assume $U$ is the $s$-harmonic extension of a periodic solution $u$ of (\ref{3}). Then for all  $x\in\mathbb{R}$ we have
$$
   \frac{1}{2}\int_0^\infty [U_x^2(x,y)-U_y^2(x,y)]y^{a} dy-F(U(x,0)) \equiv  C_T.
$$
\end{thm}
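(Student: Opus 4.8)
The plan is to differentiate the claimed quantity in $x$ and show the derivative vanishes identically, using the extension equation $\mathrm{div}(y^a\nabla U)=0$ together with the boundary condition $\partial U/\partial\nu^a = -F'(u)$, i.e. $-\lim_{y\downarrow 0} y^a U_y = -d_s^{-1}\cdot 0$... more precisely $d_s\,\partial U/\partial\nu^a + F'(U(x,0)) = 0$ on $\{y=0\}$. So I would set
$$
h(x) := \frac{1}{2}\int_0^\infty \bigl[U_x^2(x,y) - U_y^2(x,y)\bigr] y^a\,dy - F(U(x,0)),
$$
and compute $h'(x)$. Differentiating under the integral sign (justified below) gives
$$
h'(x) = \int_0^\infty \bigl[U_x U_{xx} - U_y U_{xy}\bigr] y^a\,dy - F'(U(x,0))\,U_x(x,0).
$$
The key algebraic step is to rewrite the integrand as a $y$-derivative. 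Since $U$ satisfies $y^a U_{xx} = -\partial_y(y^a U_y)$ from the extension equation, I have $U_x\,y^a U_{xx} = -U_x\,\partial_y(y^a U_y)$, and combining with $-U_y U_{xy} y^a = -\tfrac{1}{2}\partial_x(U_y^2)y^a$... let me instead group directly: $U_x\,\partial_y(y^a U_y) + \partial_y(U_x) \cdot y^a U_y = \partial_y(y^a U_x U_y)$. Hence
$$
U_x U_{xx} y^a - U_y U_{xy} y^a = -U_x\,\partial_y(y^a U_y) - y^a U_y U_{xy} = -\partial_y\bigl(y^a U_x U_y\bigr).
$$
Therefore $\int_0^\infty [U_x U_{xx} - U_y U_{xy}] y^a\,dy = -\bigl[y^a U_x U_y\bigr]_0^\infty = \lim_{y\downarrow 0} y^a U_x U_y - \lim_{y\to\infty} y^a U_x U_y$. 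The boundary term at $y=0$ equals $U_x(x,0)\cdot\lim_{y\downarrow 0}y^a U_y = U_x(x,0)\cdot\bigl(-\partial U/\partial\nu^a\bigr) = U_x(x,0)\cdot d_s^{-1} F'(u(x)) \cdot(-1)$... tracking the sign convention $\partial U/\partial\nu^a = -\lim_{y\downarrow 0} y^a U_y$ and $d_s\,\partial U/\partial\nu^a = -F'(u)$, we get $\lim_{y\downarrow 0} y^a U_y = -\partial U/\partial\nu^a = d_s^{-1}F'(u)$, so the $y=0$ boundary term is $F'(u(x))\,U_x(x,0)$ up to the normalization constant, which is exactly what cancels the $-F'(U(x,0))U_x(x,0)$ term in $h'(x)$ (after absorbing $d_s$ into the definition or noting the paper's normalization makes this precise). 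Thus $h'(x)=0$, so $h\equiv C_T$, and periodicity of $U$ in $x$ makes $C_T$ a genuine constant.

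The main obstacle — and the step deserving real care — is justifying the manipulations at the two ends of the $y$-interval: differentiation under the integral sign near $y=0$ (where $y^a$ is singular when $a<0$, i.e. $s>1/2$) and the vanishing of $\lim_{y\to\infty} y^a U_x U_y$. For the decay at infinity I would invoke the Poisson-kernel representation \eqref{2}: since $u$ is bounded and periodic, $U(x,y)$ converges to its mean as $y\to\infty$ and its $x$- and $y$-derivatives, with the kernel weight $y^a$, decay fast enough that $y^a U_x U_y \to 0$; one can estimate $U_x, U_y$ via differentiating $p_s$ and the periodicity. For the behavior near $y=0$ I would use the regularity theory for the degenerate equation $\mathrm{div}(y^a\nabla U)=0$ (Caffarelli–Silvestre, as in \cite{caff}): $u$ being a smooth bounded solution of \eqref{3}, $U$ is smooth up to $\{y=0\}$ in the appropriate weighted sense, $U_x(\cdot,y)\to U_x(\cdot,0)$, and $y^a U_y$ extends continuously to $y=0$ with the stated limit; these give both the legitimacy of dominated-convergence differentiation in $x$ and the value of the boundary term. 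I would present the formal computation first and then append a short paragraph citing these two ingredients, mirroring the treatment in \cite{cabre2} and \cite{gz}.
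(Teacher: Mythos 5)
Your proposal is correct and follows essentially the same route as the paper: differentiate the Hamiltonian quantity in $x$, use $\mathrm{div}(y^a\nabla U)=0$ to turn the integrand into $-\partial_y(y^aU_xU_y)$, and evaluate the boundary terms, with the $y=0$ term cancelling $-F'(U(x,0))U_x(x,0)$ via the Caffarelli--Silvestre boundary condition (the paper handles the $d_s$ normalization with the same looseness you do). The technical points you flag are settled in the paper by citing Lemma 5.1 of \cite{cabre2}, which gives both the regularity needed to differentiate under the integral and the finiteness of $\int_0^\infty y^a|\nabla U|^2\,dy$ yielding $\lim_{y\to\infty}y^aU_xU_y=0$, in place of your Poisson-kernel estimates.
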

\begin{proof}
By Lemma 5.1 in \cite{cabre2}, we have $\int_0^{+\infty}y^a|\nabla U(x,y)|^2dy<\infty$.
Hence $\lim_{y\rightarrow +\infty}y^aU_y(x,y)U_x(x,y)=0.$

We introduce the function
$$
   w(x):=\frac{1}{2}\int_0^\infty [U_x^2(x,y)-U_y^2(x,y)]y^{a} dy.
$$
Regularity result (see Lemma 5.1 in \cite{cabre2})  allows us to differentiate within the integral in  the above equality to get
$$
w'(x)=\int_0^\infty y^{a}[U_xU_{xx}-U_yU_{xy}](x,y) dy.
$$
Note that
$$
(y^aU_y)_y+y^aU_{xx}=0.
$$
Using integration by parts, we have
$$
w'(x)=-[y^aU_y(x,y)U_x(x,y)]|_{y=0}^{+\infty}=\lim_{y\rightarrow0^+}y^aU_y(x,y)U_x(x,y).
$$
Since $U$ is the $s$-harmonic extension of  solution $u$ of (\ref{3}), we have
$$
\lim_{y\rightarrow0^+}y^aU_y(x,y)U_x(x,y)=(-\partial_{xx})^su(x)u'(x)=\frac{d}{dx}F(U(x,0)).
$$
Hence
$$
w'(x)=\frac{d}{dx}F(U(x,0)),
$$
which gives the  result of this lemma.

\end{proof}

To set up the following Modica-type  inequalities, we assume that $F$ satisfies (\ref{4})-(\ref{5}) and is even.

The existence of an odd and periodic solution $u(x)$ of (\ref{3}) has been proved in \cite{gzd} by using  variational method.
Indeed, an even and and periodic solution can also been shown to exist by using the same  method.
Different from \cite{gzd}, now we consider  the energy functional
$$
J(U,\Omega_T):=\frac{1}{2}\int_{\Omega_T}y^a|\nabla U(x,y)|^2dxdy
+\int_{-\frac{T}{2}}^{\frac{T}{2}}F(U(x,0))dx
$$ in the admissible set
$$
\Lambda_T=\{U\in H^1(\Omega_T,y^a): U(x+T,y)=U(x,y), U(-x,y)=U(x,y),
$$
$$
 U(0,y)\geq0\geq U(\frac{T}{2},y),
~\forall ~ y\geq0\},
$$
where $\Omega_T:=[-\frac{T}{2},\frac{T}{2}]\times [0,+\infty)$ and
\begin{equation}\label{23}
H^1(\Omega_T,y^a):=\{U:  y^{a}(U^2+|\nabla U|^2)\in L^1(\Omega_T)\}.
\end{equation}
Note that $J(U,\Omega_T)\geq0$. On the other hand, we have that $0\in \Lambda_T$ and $J(0,\Omega_T)=F(0)T<+\infty$.
Hence there exists a minimizing sequence $\{U_k\}\subseteq \Lambda_T$ of $J$, namely
$$
\lim_{k\rightarrow\infty}J(U_k,\Omega_T)=m_T:=\inf_{U\in\Lambda_T}J(U,\Omega_T).
$$
From the condition of $F$ and the definition of $\Lambda_T$, we may assume that $\partial_x U_k\leq0$ in $[0,\frac{T}{2}]\times [0,+\infty)$.
Similarly as in \cite{gzd}, we can find a minimizer $U_T$ of the energy $J$ in $\Lambda_T$, and prove that $U_T\not\equiv0.$

Then we extend $U_T$  periodically  (with respect to $x$) from $\Omega_T$ to the whole half space $\overline{\mathbb{R}_+^{2}}$, and we still denote it as $U_T$.

Set
$$
u(x):=U_T(x,0).
$$
Then $u$ is an even periodic solution of (\ref{3}).
A Hopf principle in \cite{cabre2} shows that $U_T(x,0)=u(x)\in(-1,1)$ and $u'(x)<0$ in $(0,\frac{T}{2})$.

\begin{thm}\label{thm6}(Modica-type  inequalities)
Assume $U(x,y)$ is the $s$-harmonic extension of an even periodic solution $u(x)$ of (\ref{3}). Then for every $y\geq0$ and all  $x\in\mathbb{R}$ we have
\begin{equation}\label{24}
   \frac{1}{2}\int_0^y [U_x^2(x,\tau)-U_y^2(x,\tau)]\tau^{a} d\tau-F(U(x,0))-C_T\leq \hat{C},
\end{equation}
where   $\hat{C}:=\sup_{x\in\mathbb{R}}\{-F(u(x))-C_T\}>0$ and $C_T$ is the constant given in Theorem \ref{thm5}.
\end{thm}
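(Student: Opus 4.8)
The plan is to derive \eqref{24} from the Hamiltonian identity of Theorem~\ref{thm5} by reducing it to an upper bound for a single scalar quantity, and then to establish that bound by a maximum-principle argument adapted to the degenerate gradient structure of that quantity. Set
$$\psi(x,y):=\frac{1}{2}\int_y^\infty\big[U_y^2(x,\tau)-U_x^2(x,\tau)\big]\tau^a\,d\tau .$$
By Theorem~\ref{thm5}, $F(U(x,0))+C_T=\frac{1}{2}\int_0^\infty[U_x^2-U_y^2]\tau^a\,d\tau$, so the left-hand side of \eqref{24} equals $\psi(x,y)$; in particular $\psi(x,0)=-F(u(x))-C_T$, and therefore $\hat C=\sup_{x\in\mathbb R}\psi(x,0)$. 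Moreover $|\psi(x,y)|\le\frac{1}{2}\int_y^\infty\tau^a|\nabla U(x,\tau)|^2\,d\tau$, which by Lemma~5.1 of \cite{cabre2} is uniformly bounded in $x$ and tends to $0$ as $y\to\infty$; the same estimate shows that $\psi$ extends continuously up to $\{y=0\}$. So it suffices to prove $\psi(x,y)\le\hat C$ for all $x\in\mathbb R$ and all $y\ge0$.

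The computational heart is the gradient of $\psi$. Differentiating within the integral (justified as in the proof of Theorem~\ref{thm5}) and using the extension equation in the form $(\tau^aU_y)_\tau=-\tau^aU_{xx}$, one verifies the pointwise identity $\partial_\tau(\tau^aU_xU_y)=\tau^a(U_yU_{xy}-U_xU_{xx})$; integrating it over $\tau\in(y,\infty)$ and using $\tau^aU_xU_y\to0$ as $\tau\to\infty$ (Lemma~5.1 of \cite{cabre2}) gives
$$\psi_x=-y^aU_xU_y,\qquad \psi_y=\frac{1}{2}\,y^a\big(U_x^2-U_y^2\big),$$
equivalently $\psi_y+i\psi_x=\frac{1}{2}\,y^a(U_x-iU_y)^2$, and in particular $|\nabla\psi|=\frac{1}{2}\,y^a|\nabla U|^2$.

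Suppose now, for contradiction, that $\sup\psi>\hat C$. Since $\psi$ is $T$-periodic in $x$, continuous on $\{y\ge0\}$, satisfies $\psi\le\hat C$ on $\{y=0\}$, and $\psi\to0$ uniformly as $y\to\infty$ while $\hat C>0$, the supremum is attained at some $(x_\ast,y_\ast)$ with $y_\ast\in(0,\infty)$. There $\nabla\psi=0$, so by the formulas above $U_xU_y=0$ and $U_x^2=U_y^2$ at $(x_\ast,y_\ast)$, which forces $\nabla U(x_\ast,y_\ast)=0$. Since $u$ is nonconstant, $U$ is nonconstant, and $U$ is real-analytic in $\{y>0\}$, being there a solution of the uniformly elliptic equation $U_{xx}+U_{yy}+\frac{a}{y}U_y=0$ with real-analytic coefficients. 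Hence $U-U(x_\ast,y_\ast)$ vanishes at $(x_\ast,y_\ast)$ to a finite order $N+1\ge2$; comparing the lowest-degree homogeneous components of the equation shows that the leading part $\mathcal P_{N+1}$ of the Taylor expansion of $U$ about $(x_\ast,y_\ast)$ is a harmonic homogeneous polynomial of degree $N+1$, so the leading homogeneous part of $U_x-iU_y$ at $(x_\ast,y_\ast)$ is a nonzero monomial $c\,z^{N}$ with $c\neq0$, where $z:=(x-x_\ast)+i(y-y_\ast)$. Substituting into $\psi_y+i\psi_x=\frac{1}{2}y^a(U_x-iU_y)^2$ and integrating,
$$\psi(x,y)-\psi(x_\ast,y_\ast)=\frac{y_\ast^{\,a}}{2(2N+1)}\,\mathrm{Im}\big(c^2z^{\,2N+1}\big)+O\big(|z|^{2N+2}\big),$$
whose leading term is a nonzero homogeneous polynomial of \emph{odd} degree, hence sign-changing in every neighbourhood of $(x_\ast,y_\ast)$. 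This contradicts the maximality of $\psi$ at $(x_\ast,y_\ast)$, so $\psi\le\hat C$ everywhere, which is \eqref{24}.

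I expect the analysis at the maximum point to be the main obstacle: since $\nabla\psi$ is quadratic in $\nabla U$, at a would-be interior maximum not only $\nabla\psi$ but the whole Hessian of $\psi$ degenerates, so the naive second-order test is vacuous; circumventing this relies on interior analyticity of $U$, on the leading Taylor polynomial of a solution being harmonic, and on the elementary fact that a nonzero homogeneous polynomial of odd degree in two real variables is never one-signed. When $a\ge0$, i.e.\ $s\in(0,\frac{1}{2}]$, this step has a shortcut: a direct computation gives $\mathrm{div}(y^a\nabla\psi)=a\,y^{2a-1}U_x^2\ge0$, so $\psi$ is a subsolution of a degenerate-elliptic operator with $A_2$ weight $y^a$ on the half-cylinder, and $\psi\le\hat C$ follows from the weak maximum principle; this shortcut breaks down exactly when $a<0$, which is why the general argument must be phrased through the structure of the critical point.
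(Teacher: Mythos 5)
Your reduction and your gradient computations are in substance the same as the paper's: your $\psi$ is exactly the paper's comparison function $\hat v$ (the Hamiltonian identity of Theorem \ref{thm5} converts the partial integral in (\ref{24}) into your tail integral), and $\psi_x=-y^aU_xU_y$, $\psi_y=\frac{1}{2}y^a(U_x^2-U_y^2)$ are the paper's $\hat v_x,\hat v_y$. Where you genuinely diverge is in excluding an interior maximum. The paper exploits the particular even solution constructed before the theorem, for which $U_x<0$ in $(0,\frac{T}{2})\times(0,\infty)$ and $U_x(0,y)=U_x(\frac{T}{2},y)=0$: it rewrites $\mathrm{div}(y^{-a}\nabla\hat v)=ay^{-1}U_y^2$ as a homogeneous, locally uniformly elliptic equation with the drift term $ay^{-1-a}(U_y/U_x)\hat v_x$ and applies the strong maximum principle on the half-period strip, pushing the maximum to $\{y=0\}$. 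You instead analyze a putative interior maximizer directly: $\nabla\psi=0$ there forces $\nabla U=0$, and interior real-analyticity plus the fact that the leading Taylor polynomial of $U$ is harmonic yields the sign-changing odd-degree leading term $\mathrm{Im}(c^2z^{2N+1})$ for $\psi-\psi(x_*,y_*)$, a contradiction. I checked the leading-order bookkeeping and the radial integration; this step is correct, it needs neither the monotonicity $U_x<0$ nor evenness, and your side remark that for $a\ge0$ one has $\mathrm{div}(y^a\nabla\psi)=ay^{2a-1}U_x^2\ge0$ (so a weighted weak maximum principle suffices) is also correct, so in this respect your route is more flexible than the paper's.

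The genuine gap is the positivity $\hat C>0$ (at least $\hat C\ge0$), which you assert but never prove, although it is both part of the statement of Theorem \ref{thm6} and load-bearing in your argument: without knowing $\sup\psi>\hat C\ge 0$ you cannot exclude that the supremum is only approached as $y\to\infty$, where $\psi\to0$, so no interior maximizer need exist and the contradiction never gets started. This is precisely where the evenness hypothesis must enter, and your proof never uses it. The paper's argument closes this: by even symmetry $U_x(\frac{T}{2},y)=0$ for every $y\ge0$, so the Hamiltonian identity gives $-F(u(\frac{T}{2}))-C_T=\frac{1}{2}\int_0^\infty \tau^a U_y^2(\frac{T}{2},\tau)\,d\tau\ge 0$, hence $\hat C\ge 0$, and the inequality is strict unless $U_y(\frac{T}{2},\cdot)\equiv 0$, which combined with $U_x(\frac{T}{2},\cdot)\equiv 0$ would force $U$ to be constant (unique continuation from the Cauchy data on that line), contradicting that $u$ is nonconstant. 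Adding this observation (or the analogous one at $x=0$) makes your proof complete; as written, the positivity of $\hat C$, and with it the very existence of the interior maximum you analyze, is unjustified.
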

\begin{proof}
We introduce the function
$$
v(x,y):=\frac{1}{2}\int_0^y [U_x^2(x,\tau)-U_y^2(x,\tau)]\tau^{a} d\tau
$$
and define
$$
\hat{v}(x,y):=\frac{1}{2}\int_0^y [U_x^2(x,\tau)-U_y^2(x,\tau)]\tau^{a} d\tau-F(U(x,0))-C_T.
$$
By the periodicity and even symmetry  of $U(x,y)$ (with respect to $x$), it suffices to prove (\ref{24}) for every $y\geq0$ and all  $x\in[0,\frac{T}{2}]$.
Note that
\begin{equation}\label{25}
\lim_{y\rightarrow+\infty}\hat{v}(x,y)=0.
\end{equation}
Recall that $U_x(0,y)=0=U_x(\frac{T}{2},y)$ for any $y\geq0$, we have
\begin{equation}\label{26}
\hat{v}(0,y)<\hat{v}(0,0),~~~\hat{v}(\frac{T}{2},y)<\hat{v}(\frac{T}{2},0).
\end{equation}
Hence $\hat{v}$ is not identically constant.

Elementary calculation shows $\hat{v}_x=-y^aU_xU_y$ and
 \begin{equation}\label{27}
\mbox{div}(y^{-a}\nabla    \hat{v})=ay^{-1}U_y^2.
\end{equation}
Owing to $U_x<0$ in $(0,\frac{T}{2})\times(0,+\infty)$,
equation (\ref{27}) can be written as
$$
\mbox{div}(y^{-a}\nabla    \hat{v})+ay^{-1-a}\frac{U_y}{U_x}\hat{v}_x=0.
$$
Note that the operator in the left hand side is uniformly elliptic with continuous coefficients in compact sets of $(0,\frac{T}{2})\times(0,+\infty)$.
Since $\hat{v}$ is not identically constant,
 $\hat{v}$ cannot achieve its maximum  in any interior point of $(0,\frac{T}{2})\times(0,+\infty)$.
This fact and (\ref{25})-(\ref{26}) show that $\hat{v}$ achieves its maximum  in  $[0,\frac{T}{2}]\times[0,+\infty)$ at $[0,\frac{T}{2}]\times \{0\}$,
and we denote the maximum as $\hat{C}$.
We have
\begin{eqnarray}
\nonumber
\hat{C}&=&\sup\{ -F(U(x,0))-C_T\}\geq -F(U(\frac{T}{2},0))-C_T
\\ \nonumber &
=&\frac{1}{2}\int_0^{+\infty} U_y^2(\frac{T}{2},\tau) d\tau>0.
\end{eqnarray}
The proof is complete.

\end{proof}

\section{Asymptotic behavior}

The following results are established in \cite{gzd}.
\begin{pro}\label{pro2}
(\cite{gzd}) Let $s\in(0,1)$. Assume  $F$ satisfies the assumptions (\ref{4})-(\ref{5}) and is even. Then there exists $T_0>0$ such that
for any $T>T_0$,  Eq.(\ref{3}) admits an odd periodic solution $u_T$ with period $T$, and $u_T(x)\in(0,1)$ for $x\in(0,\frac{T}{2})$. Moreover,
for any positive number $\sigma<\frac{1}{2}$, there exists $T_\sigma\geq T_0$ such that for any  $T>T_\sigma$, we have
 \begin{equation}\label{28}
J(U_T,\Omega_T)<\sigma F(0)T,
\end{equation}
where $U_T$ is the $s$-harmonic extension of $u_T$.
\end{pro}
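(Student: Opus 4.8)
The plan is to realize $u_T$ as the trace of a minimizer of the localized energy $J(\cdot,\Omega_T)$ over a class of odd periodic competitors, in the spirit of the construction recalled in Section 4, and then to extract \eqref{28} from the bound $m_T=o(T)$ as $T\to\infty$, which comes from comparing the minimum value with an explicit competitor that equals $\pm1$ on all but a bounded portion of one period. As usual one passes to the $s$-harmonic extension, so that $(-\partial_{xx})^s$ is replaced by the local degenerate operator $\mathrm{div}(y^a\nabla\,\cdot\,)$ and $J$ is the functional written in Section 4.

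\textbf{Existence of $u_T$.} Fix $T>0$ and minimize $J(\cdot,\Omega_T)$ over the admissible set
\[
\Lambda_T^{\mathrm{odd}}:=\bigl\{U\in H^1(\Omega_T,y^a):\ U(x+T,y)=U(x,y),\ U(-x,y)=-U(x,y),\ 0\le U\le1 \text{ on } [0,\tfrac{T}{2}]\times[0,\infty)\bigr\},
\]
with $H^1(\Omega_T,y^a)$ as in \eqref{23}. Since $J\ge0$, $0\in\Lambda_T^{\mathrm{odd}}$ and $J(0,\Omega_T)=F(0)T<\infty$, a minimizing sequence for $m_T:=\inf_{\Lambda_T^{\mathrm{odd}}}J(\cdot,\Omega_T)$ exists and is bounded in $H^1(\Omega_T,y^a)$; a subsequence converges weakly there and strongly on the trace, and weak lower semicontinuity of $\int y^a|\nabla U|^2$ together with Fatou's lemma for $\int F(U(\cdot,0))$ produces a minimizer $\mathcal{U}_T\in\Lambda_T^{\mathrm{odd}}$. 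Since truncation at the levels $0$ and $1$ is admissible and does not increase $J$, the first variation shows $\mathcal{U}_T$ solves $\mathrm{div}(y^a\nabla\mathcal{U}_T)=0$ in $\mathbb{R}^2_+$ with $\partial\mathcal{U}_T/\partial\nu^a=-F'(\mathcal{U}_T(\cdot,0))$ on $\partial\mathbb{R}^2_+$, so $u_T:=\mathcal{U}_T(\cdot,0)$ is a periodic solution of \eqref{3}; the regularity theory quoted from \cite{cabre2} makes $u_T$ classical. Once the estimate below gives $m_T<F(0)T=J(0,\Omega_T)$ we get $u_T\not\equiv0$, and the usual strong maximum principle and Hopf boundary-point lemma for $\mathrm{div}(y^a\nabla\,\cdot\,)$ (see \cite{cabre2}), together with $F$ being even so that $F'(0)=0=F'(\pm1)$, then give $u_T(x)\in(0,1)$ for $x\in(0,\tfrac{T}{2})$ and positivity of $U_T:=\mathrm{Ext}(u_T)$ in the interior of the strip.

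\textbf{The estimate \eqref{28}.} Fix $\sigma\in(0,\tfrac12)$. Since the $s$-harmonic extension minimizes the weighted Dirichlet energy among extensions with a given trace, $J(U_T,\Omega_T)\le J(\mathcal{U}_T,\Omega_T)=m_T$, and $m_T\le J(W,\Omega_T)$ for any $W\in\Lambda_T^{\mathrm{odd}}$; so it suffices to build a competitor of energy $o(T)$. Fix $\ell>0$, let $g:[0,\tfrac{T}{2}]\to[0,1]$ be piecewise linear with $g(0)=0$, rising affinely to $1$ on $[0,\ell]$, equal to $1$ on $[\ell,\tfrac{T}{2}-\ell]$, and falling affinely back to $0$ on $[\tfrac{T}{2}-\ell,\tfrac{T}{2}]$; let $w_T$ be the odd $T$-periodic extension of $g$ and $W_T$ its $s$-harmonic extension (truncated to $[0,1]$ on $[0,\tfrac{T}{2}]\times[0,\infty)$ if needed, which keeps it admissible and raises neither term of $J$, $w_T$ vanishing at $x=0,\tfrac{T}{2}$). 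The potential term of $J(W_T,\Omega_T)$ is at most $4\ell\max_{[-1,1]}F$, since $F(w_T)$ vanishes outside two transition layers of total length $4\ell$ per period. By the extension identity of Section 2 the Dirichlet term equals, up to a positive constant depending only on $s$, the quantity $\int_{-T/2}^{T/2}\int_{\mathbb{R}}|w_T(x)-w_T(\bar x)|^2\,|x-\bar x|^{-1-2s}\,d\bar x\,dx$, which I would bound by splitting into: (i) $|x-\bar x|\le\ell$, where $|w_T(x)-w_T(\bar x)|\le\ell^{-1}|x-\bar x|$ gives a contribution $O(\ell^{1-2s})$; (ii) $|x-\bar x|>\ell$ with $x$ or $\bar x$ in a transition layer, which is again bounded uniformly in $T$ since $\int_{|t|>\ell}|t|^{-1-2s}\,dt<\infty$; and (iii) $w_T(x)$ and $w_T(\bar x)$ in opposite wells, where $|w_T(x)-w_T(\bar x)|=2$ and the inner integral is at most a multiple of $\delta(x)^{-2s}$, $\delta(x)\ge\ell$ being the distance from $x$ to the opposite well, so the double integral is $\lesssim\int_{\ell}^{T}r^{-2s}\,dr$, that is $O(1)$ for $s>\tfrac12$, $O(\log T)$ for $s=\tfrac12$ and $O(T^{1-2s})$ for $s<\tfrac12$. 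In every case $J(W_T,\Omega_T)=o(T)$, hence $J(U_T,\Omega_T)\le m_T=o(T)<\sigma F(0)T$ for $T$ larger than some $\tilde T(\sigma)$; putting $T_\sigma:=\max\{T_0,\tilde T(\sigma)\}$ yields \eqref{28} and, a posteriori, the nontriviality used above.

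\textbf{Main obstacle.} The crux is case (iii) when $s\le\tfrac12$: because the interface of a periodic solution has infinite Dirichlet energy on all of $\mathbb{R}$, one cannot simply paste in a one-dimensional heteroclinic, and one must verify via the distance-function bound that the cross-well part of the Gagliardo energy grows only like $\log T$ (at $s=\tfrac12$) or $T^{1-2s}$ (for $s<\tfrac12$), hence stays below the linear quantity $\sigma F(0)T$. The other ingredients are standard but must be handled with care: weak lower semicontinuity and compactness in the weighted space $H^1(\Omega_T,y^a)$ (or rather its homogeneous version, the behaviour at $y=\infty$ being pinned down through the trace and the oddness), the admissibility and energy monotonicity of the truncation, and the Hopf boundary-point lemma of \cite{cabre2} needed to pass from $0\le u_T\le1$ to $u_T\in(0,1)$ on $(0,\tfrac{T}{2})$.
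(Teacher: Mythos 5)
Your proposal is correct and follows essentially the same route as the paper: Proposition \ref{pro2} is quoted from \cite{gzd}, and the strategy recalled in Section 5 (minimization of the extension functional $J(\cdot,\Omega_T)$ over an odd/periodic admissible class in $H^1(\Omega_T,y^a)$, nontriviality and \eqref{28} via an explicit competitor) is exactly what you reconstruct. Your piecewise-linear competitor and the splitting of the Gagliardo energy into near-diagonal, layer, and cross-well pieces is the same computation the paper itself performs in the proof of Theorem \ref{thm8}, so your argument in fact delivers the sharper bound \eqref{30}, which implies \eqref{28} for $T$ large (just note, in your case (i), that the Lipschitz bound must be combined with the fact that the integrand vanishes unless one of the points lies within $\ell$ of a transition layer, as in the paper's estimate of the last integral).
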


\begin{rem}
 From Theorem 3.1 we know that $T_0\leq2\pi\times \left(\frac{1}{-F''(0)}\right)^{\frac{1}{2s}}$ under the further condition $F''(0)<0$.
 \end{rem}

We recall the main idea for the proof of Proposition \ref{pro2} in \cite{gzd}.
The $s$-harmonic extension  $U_T$ of solution $u_T$ to (\ref{3}) corresponds to  energy functional
\begin{equation}\label{29}
J(U,\Omega_T)=\frac{1}{2}\int_{\Omega_T}y^a|\nabla U(x,y)|^2dxdy
+\int_0^{\frac{T}{2}}F(U(x,0))dx.
\end{equation}
We denote the admissible set of the energy $J$ as
$$
\Lambda_T:=\{U:  U\geq0, U(0,y)=0=U(\frac{T}{2},y),~ \forall y\geq0, U\in H^1(\Omega_T,y^a)\},
$$
where the notations $\Omega_T, ~H^1(\Omega_T,y^a)$ are defined in (\ref{23}).
  The existence of nontrivial minimizer $U_T$ of $J$  in $\Lambda_T$  is  obtained
  in \cite{gzd}, togeter with the estimate (\ref{28}).

Next we  shall improve  estimate (\ref{28})  in Proposition \ref{pro2}.

\begin{thm}\label{thm8}
Under the conditions in Proposition \ref{pro2}, we have
\begin{equation}\label{30}
J(U_T,\Omega_T)\leq\begin{cases}
CT^{1-2s},&s\in(0, \frac{1}{2}),\\
C\ln T,&s=\frac{1}{2},\\
C,&s\in (\frac{1}{2},1).\\
\end{cases}
\end{equation}
\end{thm}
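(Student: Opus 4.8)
The plan is to bound the minimizer's energy $J(U_T,\Omega_T)$ from above by constructing an explicit competitor $W_T\in\Lambda_T$ and estimating $J(W_T,\Omega_T)$. Since $U_T$ is the minimizer, $J(U_T,\Omega_T)\le J(W_T,\Omega_T)$, so it suffices to exhibit a competitor with energy of the claimed order. A natural competitor is built from a single transition layer: take the (one-dimensional) layer solution $\Phi_0(x,y)$ of the fractional Allen--Cahn equation on $\mathbb{R}$ (the $s$-harmonic extension of the heteroclinic connecting $-1$ to $1$), appropriately rescaled/translated so that it vanishes at $x=0$ and at $x=T/2$ up to a small correction, and then glued to be periodic on $\Omega_T$. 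More robustly, since we only need an upper bound, I would use a cruder test function: let $W_T(x,0)=w(x)$ be a fixed smooth profile that equals $0$ near $x=0$ and $x=T/2$, is positive in between, and is independent of $T$ on a bounded ``core'' region, extended by its $s$-harmonic extension $W_T=\mathrm{Ext}(w)$ truncated appropriately to lie in $H^1(\Omega_T,y^a)$ with the right boundary behavior. The key point is that the nonlocal Dirichlet energy $\frac12\int_{\Omega_T}y^a|\nabla W_T|^2$ equals $\frac{C(s)}{2}\iint \frac{|w(x)-w(\bar x)|^2}{|x-\bar x|^{1+2s}}\,dx\,d\bar x$ (by the trace identity established in Section 2), and the potential term $\int_0^{T/2}F(w)$ is bounded by a constant since $w$ only differs from a constant on a bounded set (or else one arranges $w$ to sit in a well on most of the period).

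The core of the argument is then the asymptotics of the nonlocal seminorm. With $w$ fixed near the transition and constant ($=1$, say, a well of $F$) away from the two boundary zeros, the double integral $\iint_{[-T/2,T/2]^2}\frac{|w(x)-w(\bar x)|^2}{|x-\bar x|^{1+2s}}$ splits into a bounded local contribution near the transitions plus a ``tail'' contribution where $x$ is near a transition (size $O(1)$) and $\bar x$ is far (distance up to $O(T)$). That tail behaves like $\int_1^{T}\frac{d r}{r^{1+2s}}$, which is $O(1)$ when $s>\frac12$, $O(\ln T)$ when $s=\frac12$, and $O(T^{1-2s})$ when $s<\frac12$. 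This is exactly where the trichotomy in \eqref{30} comes from. One must also check the $y$-extension energy contribution matches; this follows because the trace identity converts it back into precisely the same nonlocal seminorm, so no separate estimate is needed. Finally, since $W_T\in\Lambda_T$ (nonnegative, vanishing on $\{0\}\times[0,\infty)$ and $\{T/2\}\times[0,\infty)$, finite weighted $H^1$ norm), we get $m_T=J(U_T,\Omega_T)\le J(W_T,\Omega_T)\le C\cdot(\text{the stated bound})$.

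The main obstacle is making the competitor both admissible and efficient: it must vanish identically on the vertical lines $x=0$ and $x=T/2$ for all $y\ge0$ (not just at $y=0$), which a naive $s$-harmonic extension of a compactly-supported-transition profile will not do, since the extension of a function that is $\not\equiv0$ spreads mass everywhere. The fix is to multiply by a smooth spatial cutoff $\eta(x)$ supported in $(0,T/2)$ and equal to $1$ except near the endpoints, working with $W_T(x,y)=\eta(x)\,\mathrm{Ext}(w)(x,y)$ or simply with a truncated extension $W_T(x,y)=w(x)\,\theta(y)+\,(\text{correction})$ where $\theta$ is a fixed decreasing cutoff in $y$; one then checks directly that $\int_{\Omega_T}y^a|\nabla W_T|^2\,dx\,dy$ still has the claimed order (the cutoff introduces only bounded extra terms, since $\eta'$ and $\theta'$ are supported on bounded sets and $w$ is bounded there), and that $\int_0^{T/2}F(W_T(x,0))\,dx$ stays $O(1)$ because $W_T(x,0)$ equals a well of $F$ outside a bounded set. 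Handling these cutoff error terms carefully — in particular verifying they do not destroy the gain for $s\ge\frac12$ — is the technical heart of the proof; everything else is the elementary tail integral computed above.
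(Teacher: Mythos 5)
Your overall strategy coincides with the paper's: bound $m_T=J(U_T,\Omega_T)$ by the energy of an explicit competitor whose trace is a trapezoidal profile (equal to the well value $1$ except on transition zones of fixed width near $x=0$ and $x=T/2$), use the trace identity of Section 2 to convert the weighted Dirichlet energy of the extension into the periodic nonlocal seminorm of the trace, and obtain the trichotomy from the tail integral $\int_1^T r^{-1-2s}\,dr$; this is exactly the content of the paper's estimates (33)--(36), and your potential-term remark is also the same. The difference lies in how admissibility is achieved, and this is where your argument has a genuine gap.

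You correctly identify that the competitor must vanish on the whole vertical lines $x=0$ and $x=T/2$, but both of your proposed fixes fail quantitatively, and precisely at the point you call the technical heart. If you take $W_T(x,y)=\eta(x)\,\mathrm{Ext}(w)(x,y)$ with $\eta$ a cutoff in $x$ alone, the error term $\int_{\Omega_T} y^{a}|\eta'(x)|^2\,\mathrm{Ext}(w)^2\,dx\,dy$ is infinite: since $w\equiv 1$ on most of the period, $\mathrm{Ext}(w)(x,y)$ tends to the positive mean of $w$ as $y\to\infty$, and $\int^{\infty} y^{a}\,dy$ diverges because $a=1-2s>-1$. If instead you take $W_T(x,y)=w(x)\theta(y)$ with a fixed cutoff $\theta$ in $y$, the term $\int_{\Omega_T} y^{a}\,w(x)^2\,\theta'(y)^2\,dx\,dy$ is of order $T$ (the $x$-integral of $w^2$ is of order $T$), which already exceeds the claimed bounds $C\ln T$ and $C$ for $s\geq\frac12$ and also ruins $CT^{1-2s}$ for $s<\frac12$; choosing $\theta$ to decay at scale $T$ repairs that term but makes the gradient term $\int y^a w'(x)^2\theta(y)^2$ of order $T^{2-2s}$, which is again too large. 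So the assertion that ``the cutoff introduces only bounded extra terms'' is false as stated. The paper avoids the issue with no cutoff at all: it extends the trapezoid $h$ \emph{oddly} about $x=0$ and then $T$-periodically, so that its $s$-harmonic extension is automatically odd and $T$-periodic in $x$ and hence vanishes identically on the lines $x=0$ and $x=\frac{T}{2}$ for all $y\geq 0$; the exact energy identity then applies and only the seminorm of $h$ must be estimated. Replacing your cutoff constructions by this odd periodic reflection would turn your sketch into essentially the paper's proof.
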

\begin{proof}
Recall that in Section 2  we have
$$
\int_{-\frac{T}{2}}^{\frac{T}{2}}\int_{\mathbb{R}^+}y^a|\nabla U(x,y)|^2dxdy=\frac{C(s)}{2}\int_{-\frac{T}{2}}^{\frac{T}{2}}\int_{\mathbb{R}}\frac{|u(x)-u(\bar{x})|^2}{|x-\bar{x}|^{1+2s}}d\bar{x} dx.
$$
We construct  the following continuous function
$$
h(x):=
\begin{cases}
\frac{x}{d},&x\in[0, d],\\
1,&x\in[d, \frac{T}{2}-d],\\
-\frac{1}{d}(x-\frac{T}{2}),&x\in [\frac{T}{2}-d, \frac{T}{2}],\\
\end{cases}
$$
for some constant $d$.
We extend $h$ oddly   from $[0,\frac{T}{2}]$ to $[-\frac{T}{2},\frac{T}{2}]$. Further we extend it periodically  with period $T$.
We still denote this periodic solution as $h$.
It is easy to verify that there exists a function $H(x,y)\in \Lambda_T$ such that $h(x)=H(x,0)$.
Therefore, to prove (\ref{30}), it is enough to show that
\begin{eqnarray}\label{31}
&&\int_{-\frac{T}{2}}^{\frac{T}{2}}\int_{\mathbb{R}}\frac{|h(x)-h(\bar{x})|^2}{|x-\bar{x}|^{1+2s}}d\bar{x} dx+\int_0^{\frac{T}{2}}F(h(x))dx
\\
\nonumber &&
\leq\begin{cases}
CT^{1-2s},&s\in(0, \frac{1}{2}),\\
C\ln T,&s=\frac{1}{2},\\
C,&s\in (\frac{1}{2},1).\\
\end{cases}
\end{eqnarray}
To obtain (\ref{31}), we need to prove that
\begin{eqnarray}\label{32}
&&\int_{-\frac{T}{2}}^{\frac{T}{2}}\int_{|x-\bar{x}|\geq \frac{T}{2}}\frac{|h(x)-h(\bar{x})|^2}{|x-\bar{x}|^{1+2s}}d\bar{x} dx
\\
\nonumber &&
+
\int_{-\frac{T}{2}+d}^{-d}\int_d^{ \frac{T}{2}-d}\frac{|h(x)-h(\bar{x})|^2}{|x-\bar{x}|^{1+2s}}d\bar{x} dx
\\
\nonumber &&
+
\int_{-\frac{T}{2}+d}^{-d}\int_{-d}^d\frac{|h(x)-h(\bar{x})|^2}{|x-\bar{x}|^{1+2s}}d\bar{x} dx
\\
\nonumber &&
+
\int_{-d}^{d}\int_{-d}^d\frac{|h(x)-h(\bar{x})|^2}{|x-\bar{x}|^{1+2s}}d\bar{x} dx
\leq\begin{cases}
CT^{1-2s},&s\in(0, \frac{1}{2}),\\
C\ln T,&s=\frac{1}{2},\\
C,&s\in (\frac{1}{2},1).\\
\end{cases}
\end{eqnarray}
For the first integral, we have
\begin{equation}\label{33}
\int_{-\frac{T}{2}}^{\frac{T}{2}}\left(\int_{|x-\bar{x}|\geq \frac{T}{2}}\frac{|h(x)-h(\bar{x})|^2}{|x-\bar{x}|^{1+2s}}d\bar{x} \right)dx
\leq
CT^{1-2s}.
\end{equation}
For the second integral, we have
\begin{eqnarray}\label{34}
&&\int_{-\frac{T}{2}+d}^{-d}\int_d^{ \frac{T}{2}-d}\frac{|h(x)-h(\bar{x})|^2}{|x-\bar{x}|^{1+2s}}d\bar{x} dx
\\
\nonumber &&
\leq\frac{4}{2s}\int_{-\frac{T}{2}-d}^{-d}|d-x|^{-2s} dx\\
\nonumber &&
\leq\begin{cases}
CT^{1-2s},&s\in(0, \frac{1}{2}),\\
C\ln T,&s=\frac{1}{2},\\
C,&s\in (\frac{1}{2},1).\\
\end{cases}
\end{eqnarray}
For the third integral, we have
\begin{eqnarray}\label{35}
&&\int_{-\frac{T}{2}+d}^{-d}\left(\int_{-d}^d\frac{|h(x)-h(\bar{x})|^2}{|x-\bar{x}|^{1+2s}}d\bar{x}\right) dx
\\
\nonumber &&
=d^{-2}\int_{-\frac{T}{2}+d}^{-d}\left(\int_{-d}^d\frac{|d+\bar{x}|^2}{|x-\bar{x}|^{1+2s}}d\bar{x}\right) dx
\\
\nonumber &&
=d^{-2}\int_{-d}^d\left(\int_{-\frac{T}{2}+d}^{-d}\frac{|d+\bar{x}|^2}{|x-\bar{x}|^{1+2s}}dx\right) d\bar{x}
\\
\nonumber &&
\leq Cd^{-2}\int_{-d}^{d}|d+\bar{x}|^{2} |d+\bar{x}|^{-2s}d\bar{x}\\
\nonumber &&
\leq
C.
\end{eqnarray}
For the last integral, we have
\begin{eqnarray}\label{36}
&&\int_{-d}^{d}\left(\int_{-d}^d\frac{|h(x)-h(\bar{x})|^2}{|x-\bar{x}|^{1+2s}}d\bar{x}\right) dx
\\
\nonumber &&
\leq d^{-2}\int_{-d}^{d}\int_{-d}^d|x-\bar{x}|^{1-2s}dxd\bar{x}\\
\nonumber &&
\leq Cd^{1-2s}
\leq C.
\end{eqnarray}
Combining inequalities (\ref{33})-(\ref{36}), we obtain (\ref{32}).  The proof is
complete.
\end{proof}

Note that similar energy estimates are obtained in \cite{Valdinoci} for minimizers of the functional   in a finite interval $[a, b]$  with a homogeneous   condition outside the interval instead of a periodic condition.

{\bf Acknowledgment.} The first author is  supported by   the Natural Science Foundation of Hunan Province, China
(Grant No. 2016JJ2018). The second author is supported by  NSF DMS-1601885.

\end{document}